\newtheorem{thm}{Theorem}[section]
\newtheorem{lem}[thm]{Lemma}
\newtheorem{prop}[thm]{Proposition}
\newtheorem{cor}[thm]{Corollary}
\theoremstyle{definition}
\newtheorem{df}[thm]{Definition}
\newtheorem{rk}[thm]{Remark}
\newtheorem{conv}[thm]{Convention}
\newtheorem{notation}[thm]{Notation}
\newtheorem{theor}{Theorem}
\def\epsilon{\varepsilon}
\def\phi{\varphi}
\newcommand{\Out}{\mbox{Out}}
\begin{document}

\title[On purely loxodromic actions]{On purely
  loxodromic actions}

\author{Ilya Kapovich}
\address{\tt  Department of Mathematics, University of Illinois at Urbana-Champaign,
  1409 West Green Street, Urbana, IL 61801
  \newline \indent  http://www.math.uiuc.edu/\~{}kapovich, } \email{\tt kapovich@math.uiuc.edu}

\begin{abstract}
We construct an example of an isometric action of $F(a,b)$ on a
$\delta$-hyperbolic graph $Y$, such that this action is acylindrical,
purely loxodromic, has asymptotic translation lengths of nontrivial
elements of $F(a,b)$ separated away from $0$, has quasiconvex orbits in $Y$, but such that the orbit
map $F(a,b)\to Y$ is not a quasi-isometric embedding.
\end{abstract}

\thanks{The author was partially supported by the NSF grant DMS-1405146.}

\subjclass[2010]{Primary 20F65, Secondary  57M}

\date{}
\maketitle

\section{Introduction}

There are many natural situations in geometric topology and geometric
group theory when one wants to understand, given a group $G$
acting on some Gromov-hyperbolic space $X$, and a finitely generated
``purely loxodromic'' subgroup $H\le G$,  whether the orbit map $H\to X$ is a quasi-isometric embedding.  Here "purely loxodromic"  means that every element $h\in H$ of infinite order acts loxodromically on $X$.  The model example of this problem comes from studying subgroups of mapping class groups.  Let $S$ be a closed oriented hyperbolic surface and let $\mathcal C(S)$ be the curve complex of $S$ (known to be Gromov-hyperbolic by a result of Masur and Minsky~\cite{MM}). It is known that an element $g$ of the mapping class group $Mod(S)$ acts loxodromically on $\mathcal C(S)$ if and only if $g$ is pseudo-Anosov. A finitely generated subgroup $H\le Mod(S)$ is called \emph{convex cocompact} (see \cite{FM,Ha05,KeLe07,KeLe08}) if the orbit map $H\to \mathcal C(S)$ is a quasi-isometric embedding.  An important open problem in the study of mapping class groups asks whether every "purely pseudo-Anosov" (that is purely loxodromic for the action on $\mathcal C(S)$) finitely  generated subgroup of $Mod(S)$ is convex cocompact. 

Note that if $G$ is a word-hyperbolic group acting by translations on its Cayley graph $X$, then $g\in G$ is loxodromic if and only if $g$ has infinite order.  In this case whenever $H\le G$ is a finitely generated subgroup which is not quasiconvex in $G$, then $H$ is purely loxodromic but the orbit map $H\to X$ is not a quasi-isometric embedding. However, in this case the orbit of $H$ in $X$ is not a quasiconvex subset of $X$. Moreover, for a finitely generated subgroup $H\le G$ the orbit map $H\to X$ is a quasi-isometric embedding if and only if every (equivalenty, some) orbit of $H$ in $X$ is quasiconvex.  There are many examples of finitely generated (even word-hyperbolic) subgroups of word-hyperbolic groups that are not quasiconvex. For instance, if $G$ is the fundamental group of a closed hyperbolic 3-manifold $M$ fibering over the circle with fiber $S$, then $G=\pi_1(M)$ is word-hyperbolic and $\pi_1(S)\le G$ is not quasiconvex.

There are some situations where purely loxodromic subgroups do have
quasi-isometric embedding orbit maps. Thus a recent paper \cite{KMT}
of Koberda, Mangahas, and Taylor provides a result of this kind. Given
a right-angled Artin group $G=A(\Gamma)$ defined by a finite graph
$\Gamma$, there is an associated Gromov-hyperbolic graph $\Gamma^e$
(see \cite{KK13}), called the "extension graph",  which comes equipped
with a natural isometric action of $G$. They prove in \cite{KMT}  that
for a finitely generated subgroup $H\le G$ the orbit map $H\to
\Gamma^e$ is a quasi-isometric embedding if and only if the action of
$H$ on $\Gamma^e$ is purely loxodromic. This result is proved in
\cite{KMT} in the context of exploring a strong form of
quasiconvexity for finitely generated subgroups of finitely generated
groups called ``stability''. 

The group $Out(F_N)$ (where $F_N$ is a free group of finite rank $N\ge
3$) has a natural isometric action on the "free factor graph"
$\mathcal F_N$, which is known to be
Gromov-hyperbolic~\cite{BF14,KR,HiHo} and provides one of several
$Out(F_N)$  analogs of the curve complex. It is known~\cite{BF14} that
$\phi\in Out(F_N)$ acts on $\mathcal F_N$ loxodromically if and only
if $\phi$ is fully irreducible. There are two types of fully
irreducible elements of $Out(F_N)$: atoroidal ones (which have no
nontrivial periodic conjugacy classes in $F_N$ and have
word-hyperbolic mapping torus groups) and non-atoroidal ones.  It is
known~\cite{BH92} that a non-atoroidal $\phi\in Out(F_N)$ is fully
irreducible if and only if $\phi$ is induced by a pseudo-Anosov
homeomorphism of a compact surface with one boundary component. In
\cite{DT} Dowdall and Taylor proved that if a finitely generated $H\le
Out(F_N)$ is "purely atoroidal" and has the orbit map $H\to \mathcal
F_N$ being quasi-isometric embedding (which implies that $H$ is also
purely loxodromic for the action on $\mathcal F_N$) then the natural
extension $G_H$ of $F_N$ by $H$ is word-hyperbolic. Hamenstadt  and
Hensel~\cite{HH} suggested to call a finitely generated subgroup $H\le
Out(F_N)$ "convex cocompact" if the orbit map $H\to \mathcal F_N$ is a
quasi-isometric embedding. However, with this definition, an infinite
cyclic $H=\langle \phi\rangle\le Out(F_N)$, generated by a
non-atoroidal fully irreducible $\phi$, is considered convex
cocompact, although the group $G_H$ is not word-hyperbolic in this
case. Mann and Reynolds~\cite{Mann} defined a further coarsely
Lipschitz coarsely equivariant quotient $\mathcal P_N$ of $\mathcal
F_N$ such that $\mathcal P_N$ is Gromov-hyperbolic and such that
$\phi\in Out(F_N)$ acts loxodromically on $\mathcal P_N$ if and only
if $\phi$ is an atoroidal fully irreducible. In a new paper~\cite{DT1}
Dowdall and Taylor show that if $H\le Out(F_N)$ is a finitely
generated purely atoroidal subgroup such that the orbit map
$H\to\mathcal F_N$ is a quasi-isometric embedding (so that $H$ is
purely loxodromic for the action on $\mathcal P_N$) then the orbit map
$H\to\mathcal P_N$ is also a quasi-isometric embedding. This result
provides another interesting example where a purely loxodromic action
can be shown to have the orbit map being a quasi-isometric embedding
(under the initial assumption that the orbit map $H\to \mathcal F_N$ is a
quasi-isometric embedding.)

The goal of this note is to show that even if we make rather strong additional geometric assumptions about a purely loxodromic isometric action of a word-hyperbolic group $H$ on a Gromov-hyperbolic space $X$ (including discreteness and quasi convexity of  $H$-orbits), that is not enough to ensure that the orbit map $H\to X$ is a quasi-isometric embedding.

Before stating the main result, we recall several definitions. 

\begin{df}[Asymptotic translation length]
Let $G$ be a group acting isometrically on a metric space $X$. For an element $g\in G$ the \emph{asymptotic translation length} $||g||_X$ of $g$ on $X$ is
\[
||g||_X:=\lim_{n\to\infty} \frac{d_X(x,g^nx)}{n},
\]
where $x\in X$ is a basepoint.
\end{df}
It is well-known that the above limit always exists and does not depend on the choice of $x\in X$. Moreover, for an element $g\in G$, the map $\mathbb Z\to X$, $n\mapsto g^nx$, is a quasi-isometric embedding if and only if $||g||_X>0$. In particular, if $X$ is Gromov-hyperbolic, then $g\in G$ acts logodromically on $X$ if and only if $||g||_X>0$.  

\begin{df}[Acylidrical actions] 
An isometric action of a group $G$ on a Gromov-hyperbolic space $X$ is said to be \emph{acylindrical} if for every $R\ge 0$ there exist $L\ge 1$ and $M\ge 1$ such that whenever $x,y\in X$ are such that $d_X(x,y)\ge L$ then
\[
\#\left(\{g\in G| d_X(x,gx)\le R, d_X(y,gy)\le R\}\right)\le M
\]
\end{df}

Acylidrical actions on hyperbolic spaces play a crucial role in studying various generalizations of relatively hyperbolic groups, particularly the so-called acylindrically hyperbolic groups (see, for example \cite{DGO,Os13,H,GS,PS}), and in the study of group actions on $\mathbb R$-trees (see, for example, \cite{De,KW,Se,A}). The action of $Mod(S)$ on the curve complex $\mathcal C(S)$ is also known to be acylindrical, see \cite{Bow08} and this fact has many useful consequences in the study of mapping class groups. Acylindricity is a rather strong assumption, which brings some degree of finiteness to non-proper actions and also imposes substantial algebraic restrictions on the situation.  

Out main result is (c.f. Theorem~\ref{thm:main} below):

\begin{theor}\label{thm:A}

There exists a Gromov-hyperbolic graph $Y$ with a simplicial isometric action of $F(a,b)$ on $Y$ such that the following hold: 

\begin{enumerate}
\item The action of $F(a,b)$ on $Y$ is acylindrical.
\item The action of $F(a,b)$ on $Y$ is purely loxodromic, that is, every $1\ne g\in F(a,b)$ acts on $Y$ as a loxodromic isometry.
\item For every $1\ne g\in F(a,b)$ we have $||g||_Y\ge 1/7$.
\item For any $p\in Y$ the orbit $F(a,b)p\subseteq Y$ is a quasiconvex subset of $Y$.
\item There exists $C\ge 1$ such that for any $x,y\in F(a,b)$ if $\alpha_{x,y}$ is a geodesic from $x$ to $y$ in the Cayley graph of $F(a,b)$ with respect to the basis $\{a,b\}$, and if $\beta=[x,y]_Y$ is a geodesic from $x$ to $y$ in $Y$, then $\alpha$ and $\beta$ are $C$-Hausdorff close in $Y$.
\item For any $p\in Y$, the orbit map $F(a,b)\to Y$, $g\mapsto gp$, is not a quasi-isometric embedding, and, moreover, the action of $F(a,b)$ on $Y$ is not metrically proper. 
\end{enumerate}
\end{theor}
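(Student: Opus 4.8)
The plan is to realize $Y$ as a Cayley graph of $F(a,b)$ with respect to a carefully chosen infinite symmetric generating set, so that the extra generators act as ``shortcuts'' which compress word length along a sparse family of directions without creating any short periodic structure. Concretely, I would fix a sequence $w_1, w_2, \dots$ of cyclically reduced words in $\{a,b\}$ with $|w_n|_{\{a,b\}} \to \infty$, pairwise non-commensurable and none a proper power, chosen so that the relators $R_n = x_n w_n^{-1}$ of the redundant presentation $F(a,b) = \langle a, b, x_1, x_2, \dots \mid x_n = w_n \ (n\ge 1)\rangle$ satisfy a strong metric small cancellation condition $C'(\lambda)$, with $\lambda$ small enough that the constants below come out as claimed. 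I then set $S = \{a,b\}^{\pm 1} \cup \{w_n^{\pm 1} : n \ge 1\}$ and $Y = \mathrm{Cay}(F(a,b), S)$, with $F(a,b)$ acting by left translation and $p$ the identity vertex. Two of the six properties are then essentially immediate. Property (4) holds because the orbit $F(a,b)p$ is the full vertex set of $Y$, hence $1$-dense and trivially quasiconvex. Property (6) holds because $S$ is infinite, so the radius-$1$ ball $S \cup \{1\}$ around $p$ is infinite and the action is not metrically proper; and since $d_Y(p, w_n p) = 1$ while $|w_n|_{\{a,b\}} \to \infty$, the orbit map cannot be a quasi-isometric embedding.

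The geometric heart is to show that $Y$ is hyperbolic and that it satisfies the geodesic-tracking property (5), and here I would run the standard small cancellation (Greendlinger) machinery for the relators $R_n$. The key point is that any $S$-word can be ``unfolded'' into a word in $\{a,b\}$ by replacing each letter $w_n^{\pm 1}$ by the corresponding reduced $\{a,b\}$-word; the $C'(\lambda)$ condition guarantees that consecutive unfolded blocks overlap only in short pieces, so the unfolded path freely reduces to the reduced $\{a,b\}$-word of the element while losing only a controlled proportion of its length. This shows simultaneously that the unfolding of an $S$-geodesic is an $\{a,b\}$-quasigeodesic staying within bounded $Y$-distance of the reduced $\{a,b\}$-word, which is property (5), and (via a Dehn-algorithm/linear isoperimetric argument for the relators $R_n$) that $Y$ is $\delta$-hyperbolic. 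I would emphasize that, although the presentation is infinite and the relators have unbounded length, no quotient is taken (the group stays free), so the usual pathologies of infinite small cancellation presentations do not arise; what is needed is only that the shortcut chords are spread out enough to prevent any coarsely Euclidean configuration, which the small cancellation condition is designed to provide.

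For the translation length bound (3), and hence pure loxodromicity (2) via the criterion recalled after the first definition, I would argue by periodicity. Given $1 \ne g \in F(a,b)$ with cyclically reduced length $\ell$, write $g^k$ as a product of $N = d_Y(p, g^k p)$ generators and unfold as above; the small cancellation estimate forces the unfolded path to approximate the reduced word of $g^k$, which is periodic of period $g$. A generator $w_n^{\pm 1}$ (or $a^{\pm 1}, b^{\pm 1}$) not commensurable with $g$ can match this periodic word only along a subword of bounded length, and the small cancellation condition prevents any long generator from appearing with most of its length cancelling; hence each generator accounts for only a bounded amount of the length $k\ell$ of $g^k$, giving $N \ge k/7$ for large $k$ and so $\|g\|_Y \ge 1/7$. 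The only elements commensurable with a generator are the (non-commensurable, non-power) $w_n$ themselves, for which $d_Y(p, w_n^k p) = k$ forces $\|w_n\|_Y = 1$; crucially, the small cancellation condition rules out any $w_n$ being a power of a short element such as $a$, so no short element is over-compressed.

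The main obstacle, and the most surprising property given the failure of properness in (6), is acylindricity (1): for a single point the set $\{g : d_Y(p, gp) \le R\}$ is already infinite, so the argument must genuinely use control at two far-apart points. I would start from the observation that if $g$ displaces both $x$ and $y$ by at most $R$, then, writing $h = x^{-1}y$, the elements $u = x^{-1} g x$ and $v = y^{-1} g y$ satisfy $d_Y(p, u\,p), d_Y(p, v\,p) \le R$ together with the conjugacy relation $u = h v h^{-1}$, while $d_Y(p, h\,p) = d_Y(x,y) \ge L$. The task is then to bound, using the small cancellation combinatorics, the number of $S$-short elements $v$ that remain $S$-short after conjugation by an $S$-long element $h$: such $v$ must be essentially supported along the geodesic $[p, h\,p]$, and there are at most $M = M(R)$ of them once $L$ is large. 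Making this count rigorous---tracking how the shortcut generators behave under conjugation by long elements, and converting the two-point displacement hypothesis into a genuine constraint despite the fact that $S$-short elements can be $\{a,b\}$-long---is the step I expect to require the most care, and it is where the precise small cancellation parameters would be pinned down.
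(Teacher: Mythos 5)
You have identified the right general mechanism (shortcut generators that destroy properness while preserving loxodromicity), and your treatment of properties (4) and (6) is fine, but the core of the construction has a fatal flaw: with $S=\{a,b\}^{\pm 1}\cup\{w_n^{\pm 1}\}$ the graph $Y=\mathrm{Cay}(F(a,b),S)$ is \emph{not} Gromov-hyperbolic, and property (5) fails as well. The problem is that your generating set is not closed under taking subwords: the single chord from $g$ to $gw_n$ brings the two endpoints of the tree-segment $g[1,w_n]$ together but leaves the interior of that segment far from both. Concretely, let $m_n$ be the prefix of $w_n$ of length $|w_n|/2$. Your presentation $\langle a,b,x_1,x_2,\dots \mid x_n=w_n\rangle$ is an honest $C'(\lambda)$ presentation of the free group, so Greendlinger's lemma applies to van Kampen diagrams over it; applied to the loop (geodesic $S$-word for $m_n$)$\cdot$(letter-by-letter word $m_n$)$^{-1}$, it shows that $d_Y(1,m_n)\ge \left(\tfrac12-3\lambda\right)|w_n|$: the Greendlinger cell cannot have its long exterior arc on the geodesic side (Dehn's algorithm would shorten a geodesic), cannot be a cell of $R_k$, $k\ne n$, against the $m_n$ side (that arc would be a piece), so it must be the $R_n$-cell itself, which forces the geodesic to essentially travel the other half of the relator cycle. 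Thus the relator cycles, of length $|w_n|+1\to\infty$, are nearly isometrically embedded circles, which makes $\delta(Y)\to\infty$; and since the chord itself is a $Y$-geodesic from $1$ to $w_n$ while the midpoint of the $\{a,b\}$-geodesic is $Y$-far from it, no constant $C$ can work in (5). Your remark that ``no quotient is taken, so the usual pathologies of infinite small cancellation presentations do not arise'' has the logic backwards: Dehn's algorithm yields hyperbolicity only when relator lengths are bounded, and with unbounded relators classical small cancellation theory proves exactly the opposite of what you want, namely that the long relator cycles embed without shortcuts.

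This is precisely the point where the paper's construction is different, in two coupled ways that your proposal is missing. First, the paper cones off \emph{entire axes}: every pair of vertices on $L(w_n')$ is joined by an edge, so the generating set (all $\mathcal W$-words, i.e.\ all subwords of all powers $w_n^m$, Definition~\ref{def:Y}) is subword-closed, and hyperbolicity plus fellow-traveling come from the Kapovich--Rafi criterion for coning off uniformly quasiconvex subsets (Proposition~\ref{prop:KR}) rather than from any Dehn-algorithm argument. Second, that fix is simply unavailable inside $F(a,b)$: if you close your $S$ under subwords of powers, then $d_Y(1,w_n^k)=1$ for all $k$, so $\|w_n\|_Y=0$ and pure loxodromicity dies for the acting group. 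The paper resolves this tension by enlarging the ambient group to $F(a,b,c)$ and coning the words $w_n=v_nc\notin F(a,b)$: the only $\mathcal W$-words lying in the acting subgroup $F(a,b)$ are subwords of the $v_n^{\pm1}$ (Lemma~\ref{lem:sub}), whose $7$-aperiodicity gives $\|g\|_Y\ge 1/7$ (Proposition~\ref{prop:power}), while $d_Y(1,v_n)=1$ still kills properness. Two further gaps would remain even if hyperbolicity were repaired: (a) your hypotheses ($C'(\lambda)$, non-commensurable, no proper powers) do not bound periodicity --- $w_n$ may contain $u^{m_n}$ with $m_n\to\infty$ as long as $m_n|u|\lesssim\lambda|w_n|$ --- so you get neither the uniform $1/7$ bound nor, in a subword-closed model, even loxodromicity of such $u$; this is exactly why the paper imports Ol'shanskii's $7$-aperiodic words (Proposition~\ref{p:olsh}). (b) Your acylindricity paragraph sets up the same reduction the paper uses (conjugating the short elements to the basepoint and studying $u=hvh^{-1}$ for $|h|_Y$ large, as in Corollary~\ref{cor:ac}) but then only restates what must be proved; the actual content in the paper is Lemma~\ref{lem:use} and Proposition~\ref{prop:ac}, which pin down the form $h\equiv h_0\sigma_1\sigma_2u^k\alpha$ and invoke Lyndon--Sch\"utzenberger, and your sketch has no counterpart to this.
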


Note that, by the standard Milnor-Svarc argument (c.f. \cite[Proposition 8.19]{BH}), if $G$ is a group acting by isometries on a Gromov-hyperbolic metric space $X$ with quasiconvex orbits and if the action is metrically proper (that is, if for every metric ball $B$ the set $\{g\in G| B\cap gB\ne \varnothing\}$ is finite), then $G$ is finitely generated and the orbit map $G\to X$ is a quasi-isometric embedding.  

An instructive example for comparison with Theorem~\ref{thm:A}  comes from group actions on $\mathbb R$-trees that live in the boundary of the Culler-Vogtmann Outer space. Let $\phi\in \Out(F(a,b,c))$ be an atoroidal fully irreducible automorphism and let $T=T_\phi$ be the "stable" $\mathbb R$-tree for $\phi$, constructed from a train-track representative of $\phi$ (see~\cite{BF94,BFH97} for the construction of $T_\phi$). Then $F_3=F(a,b,c)$ acts on $T$ freely, isometrically and with dense orbits in $T$ (see, for example,  \cite{GJLL,LL}), so that this action is purely loxodromic and all $F_3$-orbits are quasiconvex in $T$. 
Condition (5) of Theorem~\ref{thm:A} also holds in this case because of the so-called "bounded back-tracking property" for "very small" actions of free groups on $\mathbb R$-trees~\cite{GJLL}.  Since the action on $T$ has dense orbits, the set of asymptotic translation lengths of nontrivial elements of $F_3$ is not separated away from $0$. The action is also not acylindrical. Indeed, take $R=1$. Then for any $M\ge 1$ there exists an element $g\in F_3$ with $0<||g||<1/M$. Consider the axis $L(g)\subseteq T$, so that $g$ acts on $L(g)$ by a translation of magnitude $||g||_T$. For any $L\ge 1$ take points $x,y\in L(g)$ with $d_T(x,y)\ge L$. Then for $k=0,1,2\dots, M$ the element $g^k$ translates each of $x$, $y$ by $k||g||_T\le 1$ so that we have $\ge M+1$ distinct elements displacing each of $x,y$ by $\le 1$. Thus the action of $F_3$ on $T$ is indeed not acylindrical. Finally, the orbit map $F_3\to T$ is not a quasi-isometric embedding. Thus this example satisfies properties (2), (4), (5) and (6) from Theorem~\ref{thm:A} but does not satisfy properties (1) and (3).

Theorem~\ref{thm:A} shows that even very strong additional assumptions on
a purely loxodromic action (including discreteness, acylindricity,
having quasiconvex orbits and having asymptotic translation lengths of
loxodromic elements being separated away from $0$) are, in general, not sufficient
to imply that the orbit map is a quasi-isometric embedding.

We briefly describe the construction of $Y$ in Theorem~\ref{thm:A} here. 
We start with an infinite sequence $v_n(a,b)\in F(a,b)$ (where $n=1,2,\dots $)  of distinct positive $7$-aperiodic words, that is such that no $v_n$ contains a subword of the form $u^7$ for any nontrivial $u$. We put $w_n=v_n(a,b)c\in F(a,b,c)$. Let $K$ be the set of all positive words $z\in F(a,b,c)$ such that $z$ is a subword of $w_n^m$ for some $m,n\ge 1$. Note that $\{a,b,c\}\subseteq K$. 
Then $Y$ is the Cayley graph of $F(a,b,c)$ with respect to the generating set  $K$. One can also view $Y$ as a "coned-off" version of the Cayley graph $X$ of $F(a,b,c)$ with respect to $\{a,b,c\}$ where for every $n\ge 1$ and for every conjugate $w_n'$ of $w_n$ in $F(a,b,c)$ we "cone-off" the axis $L(w_n')\subseteq X$ of $w_n'$ in $X$. See Definition~\ref{def:Y} below for details. 
The fact that we are coning off a collection of uniformly quasiconvex subsets of a hyperbolic graph $X$ implies (by \cite[Proposition 2.6]{KR}) that $Y$ is Gromov-hyperbolic and that part (4) of Theorem~\ref{thm:A} holds. Part (4) in turn easily implies part (3) since $F(a,b)\le F(a,b,c)$ is a quasiconvex (even convex for $X$) subgroup.  It is also clear from the construction that the orbit map $F(a,b)\to Y$, $g\mapsto gp$, is not a quasi-isometric embedding, and that in fact the action of $F(a,b)$ on $Y$ is not proper. 

To see that the action of $F(a,b)$ on $Y$ is purely loxodromic and
that has the asymptotic translation length of nontrivial elements of
$F(a,b)$ bounded below by $1/7$, we develop a precise  formula for
computing distances in $Y$ and exploit the $7$-aperiodicity property
of the words $v_n(a,b)$.  Note that the action of $F(a,b,c)$ on $X$ is
acylindrical,  but we are coning off a collection of subsets of $X$
that are uniformly quasiconvex but are not "geometrically separated"
in the sense of \cite{DGO}. The reason is that the axes of conjugates
of distinct $w_n$ and $w_m$ in $X$ can have arbitrarily long overlaps
as $n,m\to \infty$. Thus we cannot use the general result, given by
Proposition 5.40 of \cite{DGO}, to conclude that the action of
$F(a,b,c)$ on $Y$ is acylindrical (which may still be true). Instead
we give a direct argument, again exploiting the properties of periodic
and aperiodic words in free groups, that the action of $F(a,b)$ on $Y$
is acylindrical.  It would be interesting to understand whether, when
starting with an acylindrical $G$-action on a Gromov-hyperbolic space,
coning-off a $G$-equivariant collection of uniformly quasiconvex
subsets (perhaps with appropriate extra assumptions on various
constants) always produces an acylindrical action of $G$ on the
coned-off space.

I am grateful to Chris Leininger, Paul Schupp, Denis Osin, Michael Hull, Funda Gultepe, Kasra Rafi, Samuel Taylor and Spencer Dowdall for useful discussions.

\section{Construction and basic properties of the graph $Y$}

Let $F_3=F(a,b,c)$ and let $X$ be the Cayley graph of $F_3$ with
respect to the free basis $A=\{a,b,c\}$.

For a word $v$ in some alphabet, we denote by $|v|$ the length of $v$. For an element $g\in F(a,b,c)$ we denote by $|g|_A$ the freely reduced length of $g$ with respect to $A$ and denote by $||g||_A$ the cyclically reduced length of $g$ with respect to $A$. Note that $||g||_A=||g||_X$, the asymptotic translation length for the action of $g$ on $X$.

When dealing with words over the alphabet $A^{\pm 1}$, we will use $\equiv$ to indicate graphical equality of such words and we will use $=$ to indicate that the words represent the same element of $F(a,b,c)$.

We say that a freely reduced word $v\in F(a,b,c)$ is
\emph{$7$-aperiodic} if there does not exist a nontrivial cyclically
reduced word $u\in F(a,b,c)$ such that $u^7$ is a subword of $v$. It is well-known that there exist infinite $7$-aperiodic subsets of $F(a,b)$. For a sample reference we can use a result of Ol'shanskii, Lemma 1.2 in \cite{O}, where an infinite $7$-aperiodic set with additional small cancellation properties is constructed:

\begin{prop}\label{p:olsh}\cite[Lemma 1.2]{O}
There exists a sequence $v_n(a,b)\in F(a,b)$, where $n=1,2,3,\dots $
of positive words $v_n$ in $F(a,b)$ with the following
properties:

\begin{enumerate}
\item We have $|v_n|\to \infty$ as $n\to\infty$ and $|v_n|\ne |v_m|$
  whenever $m\ne n$.

\item Each $v_n$ is $7$-aperiodic.

\item If $u$ is a subword of some $v_n$ with $|u|\ge |v_n|/1000$ then
 $u$ occurs as a subword in $v_n$ exactly once, and $u$ does not
 occur as a subword of any $v_m$ with $m\ne n$.
\end{enumerate} 

\end{prop}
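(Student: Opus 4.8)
The plan is to split the statement into two parts of very different difficulty and treat them separately. Conditions~(1) and the ``large $m$'' half of~(3) are cheap: build the $v_n$ one at a time, with lengths $\ell_n=|v_n|$ strictly increasing and so sparse that $\ell_n>1000\,|v_{n-1}|$. Then any subword $u$ of the long word $v_n$ with $|u|\ge \ell_n/1000$ is already longer than every earlier $v_m$ $(m<n)$, so it cannot occur in such a $v_m$; symmetrically, a long subword of a later word is too long to sit inside an earlier one. The only genuine cross-constraint that survives is that each new word $v_n$ must avoid the finitely many long subwords of $v_1,\dots,v_{n-1}$, a finite forbidden set whose members all have length $\ge \ell_1/1000$. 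Everything thus reduces to a \emph{single-word core}: for all large $\ell$, produce a positive word $v\in\{a,b\}^\ell$ that is $7$-aperiodic, has no repeated subword of length $\ge \ell/1000$, and avoids a prescribed finite set of long forbidden subwords.

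The next step is to record why the obvious probabilistic attack on the core fails, since this dictates the whole approach. A uniformly random binary word of length $\ell$ has, with probability tending to $1$, no repeated subword of length $\ge \ell/1000$: two equal windows of length $s$ have probability $2^{-s}$, and a union bound over the $\le \ell^2$ pairs gives failure probability $\le \ell^2 2^{-\ell/1000}\to 0$; the same crude count absorbs the finite forbidden set. The obstruction is $7$-aperiodicity. The $7$-aperiodic words have exponential growth rate strictly below $2$ (already forbidding $a^7$ pushes the entropy under $\log 2$), so they are an exponentially small fraction of all binary words, and one cannot intersect the \emph{rare} event ``$7$-aperiodic'' with the \emph{typical} event ``no long repeat'' by subtracting probabilities. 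This tension---aperiodicity suppresses structure while the unique-long-subword (small cancellation) condition is a genericity statement---is the real difficulty.

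The device that resolves it is to solve the core first over a \emph{large} auxiliary alphabet and then transport the solution to $\{a,b\}$. Fix $\Sigma=\{1,\dots,q\}$ with $q$ large. Over $\Sigma$ the square-free words are abundant: their number grows with a base tending to $q$ as $q\to\infty$, in particular exceeding $q^{\,1-1/2000}$ once $q$ is large, whereas the fraction having a repeated factor of length $\ge m/2000$ or containing one of finitely many forbidden factors is at most a polynomial times $m^2 q^{-m/2000}$. For $q$ large the resulting inequality holds, so the union bound now \emph{does} go through: for every large $m$ there is a square-free $\Sigma$-word $E$ of length $m$ with no repeated factor of length $\ge m/2000$ that avoids the prescribed forbidden set. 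Running the sparse inductive construction of the first paragraph over $\Sigma$ yields a sequence $E_n$ satisfying the exact $\Sigma$-analogues of~(1)--(3).

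Finally I would pull the $E_n$ down to $\{a,b\}$ through a fixed uniform code $\kappa\colon\Sigma\to\{a,b\}^r$ and set $v_n:=\kappa(E_n)$, choosing $\kappa$ with two properties at once: it is \emph{comma-free} (synchronizing), so codeword boundaries are recoverable from any window of length $\ge 2r$, and it maps square-free $\Sigma$-words to cube-free---hence $7$-aperiodic---binary words. Such a $\kappa$ exists once $r$ is large enough to host $q$ comma-free codewords, and its power-free-preservation is a finite verification on a bounded test set (Crochemore's criterion). The translation is then mechanical: $7$-aperiodicity of $v_n$ is cube-freeness of $\kappa(E_n)$; a repeated or forbidden binary subword of length $\ge |v_n|/1000=r\ell_n/1000$ spans $\ge \ell_n/2000$ full codewords and, by synchronization, decodes to a repeated or forbidden $\Sigma$-factor of $E_n$ of length $\ge |E_n|/2000$, contradicting the properties of $E_n$; the sparse-length bookkeeping supplies the remaining half of~(3). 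The main technical obstacle is therefore the \emph{simultaneous} design and verification of $\kappa$---reconciling synchronization with power-free-preservation---since this is exactly the step converting the abundance of square-free words over a large alphabet back into the scarce-but-nonempty world of $7$-aperiodic binary words. (As the statement is quoted from \cite{O}, one may of course simply invoke Ol'shanskii's construction; the above is how I would reconstruct it.)
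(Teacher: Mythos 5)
First, a remark on the comparison itself: the paper does not prove this proposition — it is quoted verbatim from Ol'shanskii \cite[Lemma 1.2]{O} — so there is no internal proof to measure your attempt against, and it must stand on its own. It does not, because of one decisive gap. You claim, first over $\{a,b\}$ ("the same crude count absorbs the finite forbidden set") and then over $\Sigma$, that the fraction of square-free words of length $m$ containing one of the finitely many forbidden factors is at most a polynomial times $q^{-m/2000}$. That is false: the forbidden factors are the long subwords of the \emph{earlier} words $E_1,\dots,E_{n-1}$, so their lengths are bounded by a constant independent of $m$ (they are $\ge \ell_1/2000$, but they do not grow as $m\to\infty$). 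For a fixed factor $f$ of constant length, the count gives only $m\,q^{m-|f|}/N(m)$, where $N(m)$ is the number of square-free words of length $m$; since $N(m)\le (q-1+o(1))^m$ while $q^{m-|f|}$ is larger by an exponential factor $\bigl(q/(q-1+o(1))\bigr)^m$, this ratio diverges instead of tending to $0$. Indeed the truth runs the other way: a typical long word — and, by the very entropy heuristics you invoke, a typical long square-free word — \emph{does} contain any fixed short factor, with probability tending to $1$. So the inductive step "choose $E_n$ avoiding the long factors of the previous words" is precisely what your union bound cannot deliver, and it is the heart of the cross-word half of condition (3), the only part of (3) that your sparse-length trick does not already trivialize.

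To repair this one needs a structural rather than purely counting argument: for instance, build into each $v_n$ a dense pattern of "identification blocks" encoding $n$ and the position, so that any subword of length $\ge |v_n|/1000$ determines $n$ and its location and therefore cannot recur, in $v_n$ or in any $v_m$; explicit constructions of aperiodic small-cancellation families (including Ol'shanskii's) work this way. Alternatively, one would have to prove honestly that exponentially many square-free $\Sigma$-words avoid a prescribed finite set of factors — true, but a genuine fact about power-free languages, not a consequence of your estimate. Two lesser problems: (i) your constants are already borderline even for the repeated-factor event, since with your stated lower bound $q^{(1-1/2000)m}$ on the number of square-free words and threshold $m/2000$ the bound computes to $m^2q^{0}=m^2$, which does not decay — you need the repeat threshold to exceed the entropy deficit strictly; (ii) Crochemore's test-set criterion concerns square-free morphisms into alphabets of size $\ge 3$, not square-free-to-cube-free binary encodings, so the existence of your code $\kappa$ needs a different justification (marker-based comma-free codes, or test sets for $k$-power-free morphisms \`a la Richomme--Wlazinski). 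Both of these are fixable; the forbidden-factor step is the real gap.
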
 

Although we don't actually use part (3) of the above proposition in this paper, we record part (3) since it may be useful for further sharpening of the results obtained here.

\begin{conv}\label{conv:main}
From now and for the remainder of the paper, we fix a sequence of
positive words $v_n\in
F(a,b)$ satisfying the conclusions of Proposition~\ref{p:olsh}.

For $n=1,2,3,\dots $ put $w_n:=v_nc\in F(a,b,c)$.

Note that the words $v_n, w_n$ are positive and thus are freely and
cyclically reduced.

\end{conv}

\begin{df}[The graph $Y$]\label{def:Y}
Let $v_n\in F(a,b), w_n\in F(a,b,c)$,  where $n=1,2,3\dots$ be as 
in Convention~\ref{conv:main}. We define a graph $Y$ as follows.

The graph $X$ is a subgraph of $Y$ and $VY=VX$. The extra edges added
to $X$ to obtain $Y$ are defined as follows:

For every $n\ge 1$ and every conjugate $w_n'$ of $w_n$ in $F(a,b,c)$
we take the line $L(w_n')\subseteq X$ to be the axis of $w_n'$ when acting
on $X$; for every pair of vertices $x,y\in L(w_n')$ such that $d_X(x,y)\ge
2$ we add an edge joining $x$ and $y$. We call edges of $Y-X$
\emph{special edges}.

Since $X$ is the Cayley graph of $F(a,b,c)$, every oriented edge $e$
of $X$ already has a label $\mu(e)\in A^{\pm 1}$. If $e$ is an
oriented edge of $Y-X$ from a vertex $x\in VX$ to a vertex $y\in VX$,
then  $x,y\in F(a,b,c)$ and the geodesic segment $[x,y]_X$ is labelled
by the freely reduced form $z$ of the element $x^{-1}y\in
F(a,b,c)$. We then put $\mu(e):=z$ and $\mu(e^{-1})=z^{-1}$.

Thus $Y$ is a labelled graph where every oriented edge $e$ of $Y$ has
a label $\mu(e)$ which is a nontrivial freely reduced word in
$F(a,b,c)$. This assignment satisfies $\mu(e^{-1})=\mu(e)^{-1}$.
Moreover, every special oriented edge $e$ of $Y$ is labelled by some
nontrivial subword of some $w_n^m$. 

We equip $Y$ with the simplicial metric $d_Y$.  Note that the set of lines $L(w_n')$, as $n=1, 2, 3,\dots $ and $w_n'$ varies over all conjugates of $w_n$ in $F(a,b,c)$, is $F(a,b,c)$-invariant.  Hence the translation action of $F(a,b,c)$ on $X$ naturally extends to an action of $F(a,b,c)$ on $Y$ by graph automorphisms, and thus by $d_Y$-isometries. 

\end{df}

If $\gamma=e_1,e_2,\dots, e_k$ is an edge-path in $Y$, we put
$\mu(\gamma)\equiv \mu(e_1)\dots \mu(e_k)\in (A^{\pm 1})^\ast$. Note
that the label $\mu(\gamma)$ need not be a freely reduced word even if
the path $\gamma$ is a geodesic in $Y$.

Note that the space $X$ is Gromov-hyperbolic, and line each
$L(w_n')\subseteq X$ is a $0$-quasiconvex subset of $X$. 
Therefore the following statement is a direct corollary of
Proposition~2.6 of~\cite{KR} (see also Proposition 7.12 in \cite{Bow} for a related statement):

\begin{prop}\label{prop:KR}
There exist integer constants $\delta\ge 1$ and $C\ge 1$ such that:

\begin{enumerate}
\item The space $(Y,d_Y)$ is $\delta$-hyperbolic.

\item For any $x,y\in X$, if $\alpha=[x,y]_X$ is a $d_X$-geodesic from
  $x$ to $y$ in $X$ and $\beta=[x,y]_Y$ is a $d_Y$-geodesic from $x$
  to $y$ in $Y$ then $\alpha$ and $\beta$ are $C$-Hausdorff close with
  respect to $d_Y$.
\end{enumerate}

\end{prop}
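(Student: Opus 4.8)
The plan is to verify that the construction of $Y$ in Definition~\ref{def:Y} fits exactly into the coning-off framework of Proposition~2.6 of~\cite{KR}, and then to invoke that proposition directly. That proposition takes as input a $\delta_0$-hyperbolic geodesic space $X$ together with a collection $\mathcal{L}$ of $K$-quasiconvex subsets, forms the space obtained by coning off (equivalently, electrifying) each member of $\mathcal{L}$, and outputs two conclusions: that the coned-off space is $\delta'$-hyperbolic, and that for $x,y\in X$ a $d_X$-geodesic and a geodesic in the coned-off space from $x$ to $y$ are uniformly Hausdorff close — where both $\delta'$ and the Hausdorff constant depend only on $\delta_0$ and $K$, and in particular are independent of the cardinality of $\mathcal{L}$ and of the mutual configuration of its members.

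First I would record the two structural inputs. The space $X$ is the Cayley graph of the free group $F(a,b,c)$ with respect to a free basis, hence a simplicial tree; so $X$ is $0$-hyperbolic and I may take $\delta_0=0$. Next, for each $n\ge 1$ and each conjugate $w_n'$ of $w_n$, the set $L(w_n')$ is the axis of a cyclically reduced element acting on the tree $X$, so it is a bi-infinite geodesic line and in particular a $0$-quasiconvex subset of $X$. Thus the collection $\mathcal{L}=\{\,L(w_n') : n\ge 1,\ w_n'\text{ a conjugate of }w_n\,\}$ is uniformly quasiconvex with constant $K=0$, which is precisely the hypothesis required.

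Second I would match the two coning recipes. In Definition~\ref{def:Y} we pass from $X$ to $Y$ by adding, for each $L(w_n')$, an edge between every pair of its vertices at $d_X$-distance at least $2$; the effect is that each $L(w_n')$ becomes a subset of $d_Y$-diameter at most $1$. This is the standard electrified model of coning off $\mathcal{L}$, and it is $F(a,b,c)$-equivariantly quasi-isometric to the coning model of~\cite{KR}, a single cone point there corresponding, up to bounded distortion, to the image, of $d_Y$-diameter at most $1$, of the corresponding line here. I would record this comparison explicitly so that Proposition~2.6 of~\cite{KR} applies to $Y$.

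With these identifications in place, Proposition~2.6 of~\cite{KR} yields both claims at once: conclusion (1), that $(Y,d_Y)$ is $\delta$-hyperbolic for some $\delta\ge 1$ depending only on $\delta_0=0$ and $K=0$; and conclusion (2), the $C$-Hausdorff closeness of $\alpha=[x,y]_X$ and $\beta=[x,y]_Y$ measured in $d_Y$. The only point requiring genuine care — rather than a genuine obstacle — is the uniformity of the constants over the infinite, and highly overlapping, family $\mathcal{L}$: the axes of conjugates of distinct $w_n$ and $w_m$ share arbitrarily long segments, so no geometric-separation hypothesis is available. I would emphasize that Proposition~2.6 of~\cite{KR} requires only the individual uniform quasiconvexity of the coned sets and imposes no separation condition, so these overlaps are harmless here and the resulting $\delta$ and $C$ are genuinely uniform.
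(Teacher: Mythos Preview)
Your proposal is correct and matches the paper's own argument exactly: the paper simply observes that $X$ is Gromov-hyperbolic and each $L(w_n')$ is $0$-quasiconvex, and then cites Proposition~2.6 of~\cite{KR} (with Proposition~7.12 of~\cite{Bow} as a related reference) as a direct corollary. Your write-up is in fact more detailed than the paper's, but the route is identical.
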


\begin{conv}
For the remainder of the paper, we fix a number $C\ge 1$ satisfying the conclusion of Proposition~\ref{prop:KR}.
\end{conv}

We record the following useful immediate corollary of part (2) of
Proposition~\ref{prop:KR}:

\begin{cor}\label{cor:canc}
Let $x,y\in VX$ and let $x'$ be a vertex of $X$ such that $x'\in
[x,y]_X$. Then
\[
\left| d_Y(x,x')+d_Y(x',y)-d_Y(x,y)\right|\le 2C.
\] 
\end{cor}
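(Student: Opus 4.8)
The plan is to exploit the $C$-Hausdorff closeness of $X$-geodesics and $Y$-geodesics furnished by part (2) of Proposition~\ref{prop:KR}. First I would observe that the lower bound is free: by the triangle inequality in $Y$ we always have $d_Y(x,x')+d_Y(x',y)\ge d_Y(x,y)$, so the quantity inside the absolute value is nonnegative and in particular $\ge -2C$. Thus the entire content of the statement is the upper bound $d_Y(x,x')+d_Y(x',y)-d_Y(x,y)\le 2C$.

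To prove the upper bound, fix a $d_X$-geodesic $\alpha=[x,y]_X$ containing $x'$ and a $d_Y$-geodesic $\beta=[x,y]_Y$. Since $x'$ is a vertex on $\alpha$ and $\alpha,\beta$ are $C$-Hausdorff close with respect to $d_Y$, there is a point $p$ on $\beta$ with $d_Y(x',p)\le C$. Because $p$ lies on the $Y$-geodesic $\beta$ from $x$ to $y$, it satisfies $d_Y(x,p)+d_Y(p,y)=d_Y(x,y)$. Now two applications of the triangle inequality give $d_Y(x,x')\le d_Y(x,p)+C$ and $d_Y(x',y)\le C+d_Y(p,y)$; summing these and substituting the geodesic identity for $p$ yields $d_Y(x,x')+d_Y(x',y)\le d_Y(x,y)+2C$, which is exactly the desired bound.

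There is essentially no genuine obstacle here---this is precisely why the statement is labelled an immediate corollary. The only points requiring care are: making sure $p$ is chosen on $\beta$ (using the direction of Hausdorff closeness that lets us move from a point of $\alpha$ to a nearby point of $\beta$, since the opposite direction would not give a point on the $Y$-geodesic and hence not the geodesic identity we need), and keeping track that the Hausdorff distance is measured in the $d_Y$-metric rather than in $d_X$. Both of these are exactly what Proposition~\ref{prop:KR}(2) supplies.
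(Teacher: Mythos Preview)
Your argument is correct and is precisely the standard verification that the paper has in mind when it calls this an ``immediate corollary'' of Proposition~\ref{prop:KR}(2); the paper does not spell out a proof at all, and your choice of the point $p\in\beta$ with $d_Y(x',p)\le C$ followed by two triangle inequalities is exactly the intended one-line justification.
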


\begin{prop}\label{prop:qc}
For any point $x\in Y$, the orbit  $F(a,b)x\subseteq Y$ is a quasiconvex
subset of $Y$
\end{prop}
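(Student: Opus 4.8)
The plan is to deduce quasiconvexity of the orbit in $Y$ from quasiconvexity of the corresponding coset in the tree $X$, using the comparison of $d_X$- and $d_Y$-geodesics furnished by Proposition~\ref{prop:KR}(2).

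First I would reduce to the case of a vertex orbit. Quasiconvexity in the $\delta$-hyperbolic space $Y$ is stable under bounded Hausdorff perturbation: a subset lying within finite Hausdorff distance of a $K$-quasiconvex set is again quasiconvex, with the constant increased by a controlled multiple of that distance and of $\delta$ (this follows from stability of geodesics under small endpoint movements in a hyperbolic space). Since every point $x\in Y$ lies within $d_Y$-distance $1$ of some vertex $v\in VY=VX$, and since $d_Y(hx,hv)=d_Y(x,v)\le 1$ for all $h\in F(a,b)$ by equivariance, the orbits $F(a,b)x$ and $F(a,b)v$ are within Hausdorff distance $1$ in $d_Y$. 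It therefore suffices to treat orbits $F(a,b)g$ with $g\in VX=F(a,b,c)$; such an orbit is exactly the left coset $F(a,b)g$.

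Next I would show that $F(a,b)g$ is quasiconvex in $X$. Recall that $X$ is a tree and $F(a,b)$ is a convex subtree of $X$; moreover the coset $F(a,b)g$ is invariant under left multiplication by $F(a,b)$, so by equivariance it is enough to control geodesics $[g,hg]_X$ for $h\in F(a,b)$. Writing $g=g_0g_1$, where $g_0$ is the maximal prefix of $g$ lying in $F(a,b)$ (so that $g_1$ is empty or begins with $c^{\pm1}$), one checks there is \emph{no} cancellation in the products $hg=(hg_0)g_1$, whence $F(a,b)g=F(a,b)g_1$ consists of the subtree $F(a,b)$ with the fixed reduced segment $g_1$ appended at each vertex. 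A direct inspection in the tree then shows that every vertex on a geodesic between two points of $F(a,b)g$ lies within $d_X$-distance $|g_1|$ of the coset, so $F(a,b)g$ is $|g_1|$-quasiconvex in $X$. (Alternatively, one may simply invoke the standard fact that cosets of a quasiconvex subgroup of a word-hyperbolic group are quasiconvex.)

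Finally I would transfer this to $Y$. Given two points $p,q\in F(a,b)g\subseteq VX$, let $\alpha=[p,q]_X$ and $\beta=[p,q]_Y$. By Proposition~\ref{prop:KR}(2), $\beta$ is $C$-Hausdorff close to $\alpha$ in the metric $d_Y$. The previous step puts $\alpha$ inside the $d_X$-neighborhood of radius $|g_1|$ of $F(a,b)g$; and because $X$ is a subgraph of $Y$ with $VX=VY$, every $X$-edge-path is a $Y$-edge-path, so $d_Y\le d_X$ on vertices and this $d_X$-neighborhood is contained in the corresponding $d_Y$-neighborhood. Combining these, $\beta$ lies in the $d_Y$-neighborhood of radius $C+|g_1|+O(1)$ of $F(a,b)g$, which proves quasiconvexity (with a constant depending on $g$, as permitted by the pointwise statement). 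I expect the only delicate point to be this last transfer: one must be careful that the relevant inequality between the metrics runs in the favorable direction $d_Y\le d_X$, so that an $X$-neighborhood of the $X$-geodesic yields a genuine $d_Y$-neighborhood of the orbit even though $d_Y$ may be vastly smaller than $d_X$.
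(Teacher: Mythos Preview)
Your argument is correct and follows the same core idea as the paper: use Proposition~\ref{prop:KR}(2) to pull a $Y$-geodesic close to the corresponding $X$-geodesic, and then use that $F(a,b)$ (or its cosets) is quasiconvex in the tree $X$.

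The only notable difference is in the reduction step. You reduce to an arbitrary vertex orbit $F(a,b)g$ and then spend a paragraph showing this coset is $|g_1|$-quasiconvex in $X$. The paper instead reduces all the way to $x=1$, using the same Hausdorff-stability observation you spell out (any orbit $F(a,b)x$ is within $d_Y(1,x)$ of $F(a,b)\cdot 1$). This makes the $X$-step trivial: the $d_X$-geodesic $[1,g]_X$ for $g\in F(a,b)$ is labeled by a reduced word in $a,b$, hence lies \emph{inside} $F(a,b)$, so no coset computation is needed and the quasiconvexity constant comes out as $C+1$ directly. Your route is a bit longer but has the minor advantage of making the dependence of the constant on the basepoint explicit.
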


\begin{proof}

We may assume that $x=1\in F(a,b)$, so that
$F(a,b)x =F(a,b)\subseteq VY$. 

Let $g\in F(a,b)$ be arbitrary and let $\alpha=[1,g]_X$ be the (unique)
$d_X$-geodesic path from $1$ to $g$ in $X$. Thus $\gamma$ is labbeled by the
freely reduced $v(a,b)$ form of $g$.  Let $\beta=[1,g]_Y$ be a $d_Y$-geodesic
from $1$ to $g$ in $Y$.

By Proposition~\ref{prop:KR}, for every point $p\in \beta$ there
exists a vertex $q$ on $\alpha$ such that $d_Y(p,q)\le C+1$. Thus $q$
represents an elememnt of $f(a,b,c)$ given by some initial segment of
the word $v(a,b)$ and hence $q\in F(a,b)$. This shows that $F(a,b)$ is
a $(C+1)$-quasiconvex subset of $Y$, as required.

\end{proof}

\section{Computing distances in $Y$}

\begin{df}
A nontrivial freely reduced word $z\in F(a,b,c)$ is said to be a
\emph{$\mathcal W$-word} if for some $n\ge 1$ and some integer
$m\ne 0$ the word $z$ is
a subword of $w_n^m$. 

For a freely reduced word $w\in F(a,b,c)$, a \emph{$\mathcal
  W$-decomposition} of $w$ is a decomposition
\[
w\equiv z_1\dots z_k
\]
such that each $z_i$ is a $\mathcal W$-word.
\end{df}

\begin{rk} Note that since each of the positive words $v_n(a,b)$ is $7$-aperiodic and $|v_n|\to\infty$ as $n\to\infty$, each of the letters $a,b$ appears in $v_n$ for all sufficiently large $n$. Also, by definition $w_n=v_nc$. Hence every letter from $\{a,b,c\}^{\pm 1}$ is a $\mathcal W$-word.  

Let $Z$ be the set of all positive $\mathcal W$-words $z\in F(a,b,c)$.  Then the graph $Y$ can also be viewed as the Cayley graph of $F(a,b,c)$ with respect to the generating set $Z$.
\end{rk}

\begin{lem}\label{lem:sub}
Let $z(a,b)\in F(a,b)$ be a nontrivial freely reduced word. Then $z$ is a $\mathcal W$-word if and only if there is $n\ge 1$ such that $z$ is a subword of $v_n$ or of $v_n^{-1}$.
\end{lem}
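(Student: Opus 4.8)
We need to prove: for a nontrivial freely reduced word $z \in F(a,b)$ (so $z$ involves only letters $a, b$ and their inverses, no $c$), $z$ is a $\mathcal{W}$-word (a subword of some $w_n^m$) if and only if $z$ is a subword of some $v_n$ or $v_n^{-1}$.

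**Recall the setup.** $w_n = v_n c$ where $v_n$ is a positive word in $a, b$. So $w_n$ is a positive word in $a, b, c$ ending in $c$. And $w_n^{-1} = c^{-1} v_n^{-1}$.

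**The "if" direction (easy).** If $z$ is a subword of $v_n$, then since $v_n$ is a subword of $w_n = v_n c$ (and hence of $w_n^1$), $z$ is a subword of $w_n^1$, so $z$ is a $\mathcal{W}$-word. Similarly if $z$ is a subword of $v_n^{-1}$, then since $v_n^{-1}$ is a subword of $w_n^{-1} = c^{-1}v_n^{-1}$, $z$ is a subword of $w_n^{-1}$, hence of $w_n^m$ with $m = -1$.

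**The "only if" direction (the main content).** Suppose $z \in F(a,b)$ is a $\mathcal{W}$-word. Then $z$ is a subword of $w_n^m$ for some $n \geq 1$ and some $m \neq 0$.

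Let me think about what $w_n^m$ looks like.

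For $m > 0$: $w_n^m = v_n c \, v_n c \, v_n c \cdots v_n c$ ($m$ copies of $v_n c$). This is a positive word. Its structure is: blocks of $v_n$ separated by single $c$'s.

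For $m < 0$: $w_n^m = (w_n^{-1})^{|m|} = c^{-1} v_n^{-1} c^{-1} v_n^{-1} \cdots c^{-1} v_n^{-1}$.

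**Key observation.** Since $z \in F(a,b)$, the word $z$ contains no occurrence of $c$ or $c^{-1}$. So when $z$ appears as a subword of $w_n^m$, it must lie entirely within a region of $w_n^m$ that is free of $c^{\pm 1}$.

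In $w_n^m$ (for $m > 0$), the maximal $c$-free subwords are exactly the copies of $v_n$ (the blocks between consecutive $c$'s, plus the initial segment before the first $c$ and the final segment after the last $c$). Actually, since $w_n^m = v_n c v_n c \cdots v_n c$, between any two consecutive $c$'s sits exactly one full copy of $v_n$; before the first $c$ sits $v_n$; after the last $c$ there's nothing (the word ends in $c$). So every maximal subword of $w_n^m$ avoiding $c^{\pm 1}$ is a subword of a single copy of $v_n$. Since $z$ is $c$-free, $z$ is a subword of $v_n$.

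For $m < 0$: $w_n^m = c^{-1} v_n^{-1} c^{-1} v_n^{-1} \cdots c^{-1} v_n^{-1}$. The maximal $c$-free subwords are copies of $v_n^{-1}$, so $z$ is a subword of $v_n^{-1}$.

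**Plan.** The plan is to prove the equivalence directly by analyzing the positions of $c^{\pm 1}$ in $w_n^m$. For the "if" direction I will observe that $v_n$ is a subword of $w_n = w_n^1$ and $v_n^{-1}$ is a subword of $w_n^{-1}$, making both cases immediate. For the "only if" direction, the key point is that $z \in F(a,b)$ contains no letter $c^{\pm 1}$, so any occurrence of $z$ inside $w_n^m$ must fall within a maximal $c$-free segment of $w_n^m$. I will then describe explicitly the form of $w_n^m$: for $m>0$ it is the positive word $(v_n c)^m$, whose maximal subwords free of $c^{\pm 1}$ are precisely the subwords of individual $v_n$-blocks; for $m<0$ it is $(c^{-1}v_n^{-1})^{|m|}$, whose maximal $c$-free subwords are subwords of $v_n^{-1}$-blocks. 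Hence $z$ is a subword of $v_n$ or of $v_n^{-1}$ accordingly.

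**Main obstacle.** The only subtlety to be careful about is the graphical bookkeeping: I must confirm that $w_n^m$ is freely reduced as written (so that "subword" is unambiguous), which follows because $v_n$ is a positive word in $a,b$ and each $w_n = v_n c$ is positive, hence $(v_n c)^m$ and $(c^{-1}v_n^{-1})^{|m|}$ are freely and cyclically reduced with no cancellation at the block junctions. Given this, the claim that each maximal $c$-free segment lies inside a single copy of $v_n$ (resp. $v_n^{-1}$) is immediate from the periodic placement of the separating letters $c^{\pm 1}$, and the argument concludes without further calculation.
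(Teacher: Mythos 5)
Your proof is correct and follows essentially the same route as the paper: the paper's proof likewise observes that since $z$ involves no $c^{\pm 1}$ and $w_n^m=(v_nc)^m$, any occurrence of $z$ in $w_n^m$ must lie inside a single block $v_n$ or $v_n^{-1}$. Your write-up simply makes explicit the details (the trivial ``if'' direction and the fact that $w_n^m$ is reduced as written, which the paper records in its Convention on positive words) that the paper leaves implicit.
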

\begin{proof}

If $z(a,b)$ is a $\mathcal W$-word and thus a subword of some $w_n^m=(v_n(a,b)c)^m$ (where $m\in \mathbb Z \setminus \{0\}$) then, since $z$ does not involve $c^{\pm 1}$ it follows that  $z$ is a subword of $v_n$ or of $v_n^{-1}$. The statement of the lemma now follows.

\end{proof}

\begin{notation}
For $g\in F(a,b,c)$ denote $|g|_Y:=d_Y(1,g)$.
\end{notation}

\begin{lem}[Distance formula]\label{lem:dist}
Let $w\in F(a,b,c)$ be a nontrivial freely reduced word.

Then $|w|_Y$ is equal to the smallest $k\ge 1$ such that there
exists a $\mathcal
  W$-decomposition $w\equiv z_1\dots z_k$.
\end{lem}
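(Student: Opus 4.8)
The plan is to prove the distance formula by establishing two inequalities. Recall that $Y$ is the Cayley graph of $F(a,b,c)$ with respect to the generating set $Z$ consisting of all positive $\mathcal W$-words (equivalently, every edge of $Y$ is labelled by a nontrivial $\mathcal W$-word). Let me write $k_0(w)$ for the smallest $k$ admitting a $\mathcal W$-decomposition $w\equiv z_1\cdots z_k$; I must show $|w|_Y=k_0(w)$.

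First I would prove the inequality $|w|_Y\le k_0(w)$, which is the easy direction. Given a minimal $\mathcal W$-decomposition $w\equiv z_1\cdots z_k$ with $k=k_0(w)$, each $z_i$ is a $\mathcal W$-word, hence the label of a single edge in $Y$ (possibly after taking $z_i$ or $z_i^{-1}$; since each $z_i$ is a subword of some $w_n^m$ with $m\neq 0$ allowed, it corresponds to a special or ordinary edge of $Y$). Concatenating these $k$ edges produces an edge-path in $Y$ from $1$ to $w$ of length $k$, so $d_Y(1,w)\le k=k_0(w)$. The only point requiring a line of justification is that each $\mathcal W$-word indeed labels an edge of $Y$: a positive $\mathcal W$-word is by definition in $Z$, and a general $\mathcal W$-word is a subword of some $w_n^m$ and so, reading along the corresponding axis $L(w_n')$, joins two vertices connected by an edge of $Y$ (an edge of $X$ if the displacement is a single letter, a special edge otherwise).

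Next I would prove the reverse inequality $|w|_Y\ge k_0(w)$, which is the substantive direction. Take a $d_Y$-geodesic edge-path $\beta=e_1,\dots,e_k$ from $1$ to $w$ in $Y$, so $k=|w|_Y$. Each edge $e_i$ carries a label $\mu(e_i)$ which is a nontrivial freely reduced word that is a subword of some $w_n^m$, i.e.\ a $\mathcal W$-word. The concatenated label $\mu(\beta)\equiv\mu(e_1)\cdots\mu(e_k)$ represents $w$ in $F(a,b,c)$, giving an expression of $w$ as a product of $k$ $\mathcal W$-words. The difficulty is that $\mu(\beta)$ need not be freely reduced as a word (the paper explicitly warns of this), so this is not yet literally a $\mathcal W$-decomposition of the freely reduced word $w$. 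The plan is therefore to argue that after performing free reductions in the product $\mu(e_1)\cdots\mu(e_k)$ to obtain the freely reduced form $w$, one still has a decomposition of $w$ into at most $k$ $\mathcal W$-words. The key structural fact to exploit is that any subword of a $\mathcal W$-word is again a $\mathcal W$-word, so that when cancellation occurs between adjacent factors $\mu(e_i)$ and $\mu(e_{i+1})$, each surviving piece remains a subword of the respective $w_n^m$ and hence a $\mathcal W$-word; free cancellation only deletes letters and shortens the factors, never destroying the $\mathcal W$-property. Carrying this out carefully (for instance by induction on the total number of cancellations, or by directly reading off the $k$ surviving blocks) yields a $\mathcal W$-decomposition of $w$ with at most $k$ factors, whence $k_0(w)\le k=|w|_Y$.

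The main obstacle is the bookkeeping in this second inequality: controlling what happens to the factorization under free reduction when the concatenated geodesic label is not reduced. The crucial closure property that makes it work is that the class of $\mathcal W$-words is closed under passing to subwords (and under inversion), so cancellation between consecutive factors cannot increase the number of $\mathcal W$-blocks needed. I expect that a clean way to organize the argument is to show that each edge $e_i$ contributes at most one $\mathcal W$-word block to the reduced form of $w$, possibly after absorbing cancellation with its neighbors, so that the total count of blocks in the resulting $\mathcal W$-decomposition of $w$ is at most the number $k$ of edges. Combining the two inequalities gives $|w|_Y=k_0(w)$, as claimed.
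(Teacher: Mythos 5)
Your proposal is correct and takes essentially the same route as the paper: the easy inequality comes from the fact that every $\mathcal W$-word labels an edge of $Y$, and the reverse inequality comes from freely reducing the concatenated label of a $d_Y$-geodesic and observing that each surviving piece is a subword of a single edge label, hence still a $\mathcal W$-word, giving a $\mathcal W$-decomposition with at most $|w|_Y$ factors. The paper's proof asserts this reduction bookkeeping in exactly the same way (each $z_i$ is ``the remainder of exactly one of the $u_j$''), so your plan matches it in both structure and level of detail.
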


\begin{proof}
The definition of $Y$ implies that if $z\in F(a,b,c)$ is a $\mathcal
W$-word, then for every $g\in F(a,b,c)$ we have $d_Y(g,gz)=1$. Thus if $w\equiv z_1\dots z_t$ is a $\mathcal W$-decomposition, then
$|w|_Y\le t$. 

Suppose now that $\gamma=e_1 e_2 \dots e_k$ is a $d_Y$-geodesic
edge-path from $1$ to $w$ in $Y$, where $k=|w|_Y$. Put $u_i=\mu(e_i) \in F(a,b,c)$. Then
$w=_{F(a,b,c)} u_1u_2\dots u_k$, and each $u_i$ is a $\mathcal W$-word.

After freely reducing the product $u_1u_2\dots u_k$ we get a
factorization $w\equiv z_1\dots z_r$ where $r\le k$ and each $z_i$ is
the remainder of exactly one of the $u_j$ after all the free
cancelations are performed. Thus each $z_i$ is a $\mathcal W$-word as
well, and $w\equiv z_1\dots z_r$ is a $\mathcal W$-decomposition.
Hence, by the argument above, $k=|w|_Y\le r$. Thus $k=r$ and we
have found a $\mathcal W$-decomposition $w\equiv z_1\dots z_k$ with
$k=|w|_Y$.

We have already seen that if $w$ has a $\mathcal W$-decomposition with
$t$ factors, then $|w|_Y\le t$.

Therefore $|w|_Y$ is equal to the smallest number of factors among
all $\mathcal W$-decompositions of $w$, as required.

\end{proof}

\begin{prop}\label{prop:power}
Let $1\ne g\in F(a,b)$ be arbitrary. Then:

\begin{enumerate}
\item For every $n\ge 1$ we have $|g^n|_Y\ge \lfloor \frac{n}{7}\rfloor$. 
\item We have $||g||_Y\ge \frac{1}{7}$. 
\end{enumerate}

\end{prop}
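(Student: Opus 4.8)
The plan is to use the distance formula from Lemma~\ref{lem:dist} together with the $7$-aperiodicity of the words $v_n$. The key observation is that part (2) follows from part (1) almost immediately: since $g\in F(a,b)$, it is freely and cyclically reduced (or becomes so after cyclic reduction, but we may reduce to the cyclically reduced case), so $g^n$ is essentially $g$ repeated $n$ times and $||g||_Y = \lim_{n\to\infty} |g^n|_Y/n \ge \lim_{n\to\infty} \lfloor n/7\rfloor /n = 1/7$. So the real content is part (1).

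For part (1), the strategy is to bound from below the number of factors in any $\mathcal W$-decomposition of $g^n$. By Lemma~\ref{lem:dist}, $|g^n|_Y$ equals the minimal number $k$ of $\mathcal W$-words $z_1,\dots,z_k$ with $g^n\equiv z_1\cdots z_k$. Since $g\in F(a,b)$, the word $g^n$ involves only $a,b$ and no $c$; hence each factor $z_i$ is a $\mathcal W$-word lying in $F(a,b)$, so by Lemma~\ref{lem:sub} each $z_i$ is a subword of some $v_{n_i}$ or $v_{n_i}^{-1}$. The crucial point is that each $v_n$ is $7$-aperiodic, so it cannot contain $u^7$ for any nontrivial $u$. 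I would take $u$ to be (a cyclic conjugate of) the cyclically reduced form of $g$; then a subword of $v_n$ or $v_n^{-1}$ cannot contain too many consecutive ``periods'' of $g$. Concretely, if $g$ is cyclically reduced of length $\ell=|g|_A$, then $g^n$ has length $n\ell$ as a positive power, and any single $\mathcal W$-factor $z_i$, being a subword of some $v_n^{\pm 1}$, can overlap at most a bounded number of the periods — the aperiodicity forces that no subword of $v_n$ contains $7$ full consecutive copies of any primitive period, hence $z_i$ spans fewer than $7$ periods of $g$.

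Making this precise is where I expect the main obstacle. I would argue as follows: suppose $g^n\equiv z_1\cdots z_k$ is a $\mathcal W$-decomposition with $k=|g^n|_Y$. Reading off the factorization identifies consecutive blocks of the periodic word $g^n$. If any single factor $z_i$ were to contain $7$ or more consecutive full periods of the primitive root of $g$, then $z_i$ (a subword of some $v_{n_i}^{\pm 1}$) would contain a $7$th power of a nontrivial word, contradicting the $7$-aperiodicity of $v_{n_i}$. Therefore each factor $z_i$ covers strictly fewer than $7$ periods, so each $z_i$ advances us by less than $7$ copies of the period along $g^n$. Since $g^n$ consists of $n$ copies of $g$ (each of which is at least one period), it takes at least $\lceil n/7 \rceil \ge \lfloor n/7 \rfloor$ factors to cover all of $g^n$, giving $k \ge \lfloor n/7\rfloor$.

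The delicate points in carrying this out cleanly are: handling the primitive root of $g$ versus $g$ itself (replacing $g$ by its primitive root only strengthens the periodic structure, so it suffices to treat the case where $g$ is a power of a primitive word, and the cyclically reduced length enters the counting); ensuring that ``covers fewer than $7$ periods'' is phrased in terms of the positions within the fixed reduced word $g^n$ so that the factors tile the length $n|g|_A$ without gaps; and correctly converting the aperiodicity statement, which forbids $u^7$ as a subword of $v_n$, into a bound on how many periods of $g$ can appear inside a subword of $v_n^{\pm 1}$. Once these are set up, the counting argument is the routine part, and the floor function $\lfloor n/7\rfloor$ emerges directly from dividing the $n$ periods among factors each spanning at most $6$ periods (or fewer than $7$).
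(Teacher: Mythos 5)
Your proposal is correct and follows essentially the same route as the paper's proof: apply the distance formula (Lemma~\ref{lem:dist}), use Lemma~\ref{lem:sub} to see each $\mathcal W$-factor is a subword of some $v_{n_i}^{\pm 1}$, and invoke $7$-aperiodicity to show no single factor can absorb seven consecutive periods of the cyclically reduced form $w$ of $g$ (the paper writes $g\equiv uwu^{-1}$ and phrases this as ``every subword of the form $w^7$ must overlap at least two factors,'' which is the same counting you do). Note that your worry about passing to the primitive root is unnecessary, since $7$-aperiodicity forbids $u^7$ for \emph{every} nontrivial cyclically reduced $u$, so it applies directly to $w$ itself.
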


\begin{proof}
Let $g\equiv u w u^{-1}$ where $u,w\in F(a,b)$ are freely reduced and $w$ is cyclically reduced. Then the freely reduced form of $g^n$ is $uw^nu^{-1}$.

Let $uw^nu^{-1}\equiv z_1\dots z_k$ be a $\mathcal W$-factorization of the word $uw^nu^{-1}$. Thus each $z_i$ is a $\mathcal W$-word and $z_i\in F(a,b)$. Hence by Lemma~\ref{lem:sub}, each $z_i$ is a subord of some $v_{n_i}^{\pm 1}$.  Since the words $v_j(a,b)$ are 7-aperiodic, it follows that for every subword of $uw^nu^{-1}$ of the form $w^7$ this subword nontrivially overlaps at least two distinct factors $z_i$. Therefore $k\ge  \lfloor \frac{n}{7}\rfloor$.

Hence, by the distance formula provided by Lemma~\ref{lem:dist}, for every $n\ge 1$ we have $|g^n|_Y\ge \lfloor \frac{n}{7}\rfloor$. The definition of $||g||_Y$ now implies that $||g||_Y\ge \frac{1}{7}$. 

\end{proof}

\section{Acylindricity}

The following useful fact is a special case of Lemma~4 of Lyndon-Sch\"utzenberger~\cite{LSz}:
\begin{lem}\label{lem:LSz}
Let $u_1,u_2\in F(a,b,c)$ be nontrivial cyclically reduced words such that for some $k,t\ge 1$ the words $u_1^k$ and $u_2^t$ have a common initial segment of length $\ge |u_1|+|u_2|$. Then there exists a unique root-free cyclically reduced word $u_0\in F(a,b,c)$ such that $u_1\equiv u_0^r$ and $u_2\equiv u_0^s$ for  some $r,s\ge 1$. 
\end{lem}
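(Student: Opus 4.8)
The plan is to translate the statement into a question about periodicities of finite strings and then feed it into the Fine--Wilf periodicity theorem. Write $p:=|u_1|$ and $q:=|u_2|$, and assume without loss of generality (the hypotheses and conclusion being symmetric in $u_1,u_2$) that $p\le q$. Because $u_1$ and $u_2$ are cyclically reduced, the powers $u_1^k$ and $u_2^t$ are freely reduced exactly as written, so their common initial segment is a genuine word $w$ over $A^{\pm 1}$ with $|w|\ge p+q$. Being a prefix of $u_1^k$, the word $w$ has \emph{period} $p$, i.e. $w$ agrees with its shift by $p$ wherever both are defined; being a prefix of $u_2^t$, it has period $q$. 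This reduces the lemma to understanding words carrying two periods.

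Next I would invoke the Fine--Wilf theorem: a word admitting periods $p$ and $q$ and having length at least $p+q-\gcd(p,q)$ admits the period $g:=\gcd(p,q)$. Our length hypothesis $|w|\ge p+q\ge p+q-\gcd(p,q)$ applies with room to spare, so $w$ has period $g$. Let $v$ be the prefix of $w$ of length $g$; then $w$ is a prefix of the infinite periodic word $v\,v\,v\cdots$. Since $g\mid p$, $g\mid q$, and $|w|\ge\max\{p,q\}$, the prefixes of $w$ of lengths $p$ and $q$, which are precisely $u_1$ and $u_2$, are powers of $v$: graphically $u_1\equiv v^{p/g}$ and $u_2\equiv v^{q/g}$.

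To finish I would pass to the root-free root. The elements $u_1$ and $u_2$ both lie in the cyclic subgroup $\langle v\rangle$ of $F(a,b,c)$; since centralizers in a free group are cyclic, $\langle v\rangle$ is contained in $\langle u_0\rangle$, where $u_0$ is the unique root-free root of $v$. Hence $u_1\equiv u_0^{r}$ and $u_2\equiv u_0^{s}$ for some $r,s\ge 1$. The word $u_0$ is freely reduced (as a subword of the freely reduced $w$), and since $u_1\equiv u_0^{r}$ is cyclically reduced, the first letter of $u_0$ is not the inverse of its last letter, so $u_0$ is cyclically reduced as well. Uniqueness is immediate: any $u_0$ as in the conclusion is forced to be the root-free root of $u_1$, which is unique in a free group.

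The main obstacle is the periodicity input, namely the Fine--Wilf theorem; the root extraction and the cyclic-reducedness and uniqueness checks are routine bookkeeping. If I preferred a self-contained argument rather than a citation, I would prove the required instance by induction on $p+q$, mimicking the Euclidean algorithm: from periods $p<q$ on a word of length $\ge p+q$ one descends to the periods $p$ and $q-p$ on a prefix of length $\ge p+(q-p)$, iterating until the two periods coincide with $\gcd(p,q)$. The generous slack in our hypothesis (length $p+q$ rather than the sharp $p+q-\gcd(p,q)$) makes the index bookkeeping in this descent entirely elementary.
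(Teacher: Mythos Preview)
Your argument via Fine--Wilf is correct and standard. The paper does not actually prove this lemma: it simply records it as a special case of Lemma~4 of Lyndon--Sch\"utzenberger~\cite{LSz} and moves on. So you are supplying a self-contained proof where the paper gives only a citation; the two routes (Fine--Wilf periodicity on strings versus the Lyndon--Sch\"utzenberger analysis of the equation $a^M=b^Nc^P$ in a free group) are closely related and both classical, and either is adequate for the purposes of the paper.

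One small point worth tightening: when you ``pass to the root-free root'' $u_0$ of $v$, you slide between the group-theoretic root and the graphical one. The lemma asks for $u_1\equiv u_0^{r}$ as words, not merely $u_1=u_0^{r}$ in the group, and your parenthetical ``$u_0$ is a subword of $w$'' is not obvious if $u_0$ is defined group-theoretically. The cleanest fix is to take $u_0$ to be the \emph{primitive root of $v$ as a string}, i.e.\ the shortest prefix of $v$ of which $v$ is a graphical power. Then $u_1\equiv u_0^{r}$ and $u_2\equiv u_0^{s}$ hold graphically by construction, $u_0$ is cyclically reduced because $u_1\equiv u_0^{r}$ is, and $u_0$ is root-free in $F(a,b,c)$ because a cyclically reduced word that is a proper power in the free group is already a proper graphical power of a cyclically reduced word. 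This is cosmetic; the substance of your argument is fine.
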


\begin{lem}\label{lem:use}
Let $R\ge 1$ and let $L\ge 100(R+4C)(R+6C+10)$.

Let $h\in F(a,b,c)$ be a freely reduced word and let $g\equiv \alpha^{-1}u\alpha\in F(a,b,c)$ be a freely reduced word with $u$ being cyclically reduced. 

Suppose that $|h|_Y\ge L$, $|g|_Y\le R$ and $|h g h^{-1}|_Y\le R$. Then $h\equiv h_0\sigma_1\sigma_2 u^k \alpha$ where:
\begin{enumerate}
\item We have $|k|\ge 100(R+6C+1)$.
\item $\sigma_1, \sigma_2$ are subwords of $\alpha^{-1} u^{\pm 1}\alpha$.
\item We have  $|h_0|_Y, |\sigma_1|_Y, |\sigma_2|_Y \le R+4C$. 
\end{enumerate}
\end{lem}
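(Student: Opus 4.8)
The plan is to pass to the tree $X$ and to reinterpret the conclusion as the single assertion that $h$, after deleting its terminal $\alpha$, ends in a long power of $u$; the three ``error terms'' $h_0,\sigma_1,\sigma_2$ will then automatically be short in $Y$ because each is a subword of $g^{\pm1}=\alpha^{-1}u^{\pm1}\alpha$. Writing $\beta$ for the freely reduced form of $h\alpha^{-1}$, the key observation is that $hgh^{-1}=\beta u\beta^{-1}$ is a conjugate of the cyclically reduced word $u$, so its freely reduced form is $\beta_0\tilde u\beta_0^{-1}$, where $\tilde u$ is a cyclic rotation of $u$ and $\beta=\beta_0\,\gamma_0 u^{k}$, with $\gamma_0$ a (possibly empty) proper suffix of $u$ and $\mu=\gamma_0u^{k}$ the maximal suffix of $\beta$ that is $u$-periodic (a suffix of some $u^{N}$ or $u^{-N}$). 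The whole game is then to bound $|\beta_0|_Y$ and $|\gamma_0|_Y$ in $Y$ and to force $|k|$ large.

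First I would record the ``subword'' estimates. Since $\alpha^{-1}$ and $\alpha^{-1}u$ are vertices on the $d_X$-geodesic $[1,g]_X$, two applications of Corollary~\ref{cor:canc} give $|\alpha|_Y=d_Y(1,\alpha^{-1})\le |g|_Y+2C$ and $|u|_Y=d_Y(\alpha^{-1},\alpha^{-1}u)\le |g|_Y+4C$, so both are $\le R+4C$; more generally any subword of $g^{\pm1}$ has $|\cdot|_Y\le R+4C$. Next, from $|hgh^{-1}|_Y\le R$ and the distance formula (Lemma~\ref{lem:dist}), the vertex $\beta_0$, being a prefix of the reduced word $\beta_0\tilde u\beta_0^{-1}$ and hence a vertex on $[1,hgh^{-1}]_X$, satisfies $|\beta_0|_Y\le |hgh^{-1}|_Y+2C\le R+2C$ by one more application of Corollary~\ref{cor:canc}. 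Since $u\alpha$ is freely reduced (it is a suffix of the reduced word $g$), the concatenation $h=\beta\alpha=\beta_0\,\gamma_0\,u^{k}\,\alpha$ is itself freely reduced, so this is literally the reduced form of $h$; I then set $h_0:=\beta_0$ and let $\sigma_1,\sigma_2$ absorb $\gamma_0$ (a suffix of $u$, hence a subword of $g$) together with any partial-period or junction discrepancy, each of which is a subword of $\alpha^{-1}u^{\pm1}\alpha$ and so has $|\cdot|_Y\le R+4C$.

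It remains to force $|k|\ge 100(R+6C+1)$, and this is where the size of $L$ enters. From the reduced form $h=\beta_0\gamma_0u^{k}\alpha$ and subadditivity of $|\cdot|_Y$, on the one hand
\[
|u^{k}|_Y\ \ge\ |h|_Y-|\beta_0|_Y-|\gamma_0|_Y-|\alpha|_Y\ \ge\ L-3(R+4C),
\]
while on the other hand $|u^{k}|_Y\le |k|\,|u|_Y\le |k|\,(R+4C)$. Combining these and using $L\ge 100(R+4C)(R+6C+10)$ gives $|k|\ge (L-3(R+4C))/(R+4C)=L/(R+4C)-3\ge 100(R+6C+10)-3\ge 100(R+6C+1)$, the claimed bound. (Note $\beta_0$ may be long in the $d_X$-metric although $|\beta_0|_Y$ is bounded; this is harmless, since every estimate above is performed in $d_Y$.)

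The main obstacle is the free-cancellation bookkeeping underlying the second paragraph: one must verify that conjugating $u$ by $\beta$ strips off \emph{exactly} a maximal $u$-periodic suffix $\gamma_0u^{k}$, so that the reduced form really is $\beta_0\tilde u\beta_0^{-1}$ with $\beta_0$ a genuine prefix, and one must pin down the period unambiguously. This is the natural place to invoke Lemma~\ref{lem:LSz}: it guarantees that the maximal periodic suffix is governed by a single root of $u$, ruling out an accidental second periodicity of a different period overlapping the same stretch and legitimizing the decomposition $\mu=\gamma_0u^{k}$. The remaining care is in the orientation (the suffix may match $u^{\infty}$ or $u^{-\infty}$, which is why the statement allows $k\in\mathbb Z$ with a bound on $|k|$), and in checking, when $h$ does not already terminate in $\alpha$, that the discrepancy at the $h$--$\alpha$ junction is again a bounded subword of $g^{\pm1}$ and can be folded into $\sigma_1$ or $\sigma_2$.
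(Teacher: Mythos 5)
Your proposal is sound in its core strategy and in fact mirrors the paper's proof: both identify the same maximal power $u^k$ at the end of $h$, bound the complementary pieces in the $d_Y$-metric via Corollary~\ref{cor:canc} (prefixes of the reduced form of $hgh^{-1}$, subwords of $g^{\pm1}$), and then force $|k|$ large by the triangle inequality together with $|u|_Y\le R+4C$ and $|h|_Y\ge L$; your arithmetic ($|k|\ge L/(R+4C)-3\ge 100(R+6C+1)$) is correct. Where you differ is bookkeeping, and your version is cleaner: you conjugate the cyclically reduced core $u$ by $\beta$ (the reduced form of $h\alpha^{-1}$) and appeal to the classical description of the reduced form of a conjugate of a cyclically reduced word, $\beta u\beta^{-1}\equiv \beta_0\tilde u\beta_0^{-1}$ with $\beta\equiv\beta_0\cdot(\text{maximal $u$-periodic suffix})$, whereas the paper conjugates $\alpha^{-1}u\alpha$ by $h_2=h_1\alpha$ and tracks the cancellation by hand through a larger cast of auxiliary words ($h_3,\dots,h_7$, $\tau,\nu,\rho$, $\beta_1,\dots,\beta_4$). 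One small correction: Lemma~\ref{lem:LSz} is not what legitimizes the decomposition $\mu\equiv\gamma_0u^k$. The conjugation fact is elementary cancellation (induction on $|\beta|$: each cancellation step either strips a letter of $\beta$ or cyclically rotates $u$), and since $u$ is cyclically reduced, a nonempty suffix of $u^{N}$ ends in the last letter of $u$ while a nonempty suffix of $u^{-N}$ ends in the inverse of the first letter of $u$; these letters are distinct, so the maximal periodic suffix and its orientation are unambiguous without any appeal to roots.

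The one genuine gap is the junction between $\beta$ and $\alpha$, i.e., the claim that $h\equiv\beta_0\gamma_0u^k\alpha$ holds \emph{graphically}. Your stated reason (``$u\alpha$ is freely reduced'') does not suffice: if the maximal $u$-periodic suffix $\mu$ of $\beta$ were empty, the concatenation $\beta\alpha$ could cancel, and your fallback --- folding a junction discrepancy into $\sigma_1$ or $\sigma_2$ --- cannot work, because the lemma's conclusion demands that $h$ end graphically in $u^k\alpha$ with the $\sigma_i$ occurring \emph{before} $u^k$; a discrepancy eating into $\alpha$ would make the required factorization false, not merely harder to exhibit. The repair is to reorder the argument: first run your length estimate purely at the level of group elements ($h=\beta_0\mu\alpha$ in $F(a,b,c)$, so $|\mu|_Y\ge L-|\beta_0|_Y-|\alpha|_Y$), which already forces $\mu\ne 1$ and $|k|$ large; then observe that since $g\equiv\alpha^{-1}u\alpha$ is reduced, the first letter of $\alpha$ equals neither the first letter of $u$ nor the inverse of its last letter, so a nonempty $\mu$ (whose last letter is one of those two) cannot cancel against $\alpha$, and hence $\beta\alpha\equiv\beta_0\gamma_0u^k\alpha$ is reduced as written. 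Note also that the case $k<0$ requires the junction $\alpha^{-1}\vert u$ of $g$, not the junction $u\vert\alpha$ you cite. For what it is worth, the paper glosses over exactly the same point with the remark that ``$k=0$ is not possible \dots by a variation of the argument below,'' so this patch is needed on both sides; with it, your proof is complete.
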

\begin{proof}

Let $k\in \mathbb Z$ be the largest in the absolute value integer such that the freely reduced word $h\in F(a,b,c)$ ends in $u^k\alpha$, where $k=0$ corresponds to the case where $h$ does not end in $u^{\pm 1}\alpha$.   It is not hard to see, by a variation of the argument below, that $k=0$ is not possible under the assumptions of this lemma, so we can write $h$ as $h\equiv h_1 u^k \alpha$. We will assume that $k>0$ as the case $k<0$ is similar.

Then, at the level of group elements, in $F(a,b,c)$ we have \[hgh^{-1} =h_1\alpha (\alpha^{-1} u^k \alpha) (\alpha^{-1} u\alpha)  (\alpha^{-1} u^{-k} \alpha) \alpha^{-1} h_1^{-1}=h_1\alpha  (\alpha^{-1} u\alpha)   \alpha^{-1} h_1^{-1}.\]  Put $h_2=h_1\alpha \in F(a,b,c)$, so that $h_2$ is a freely reduced word. The maximal choice of $k$ implies that in freely reducing the product $h_2 \cdot (\alpha^{-1} u\alpha) \cdot h_2^{-1}$ not all of the word $\alpha^{-1} u\alpha$ cancels. Hence the freely reduced form of $hgh^{-1}$ is graphically equal to $h_3 u_1 h_4^{-1}$ where $u_1$ is a subword of $\alpha^{-1}u\alpha$, where $h_2\equiv h_3 \tau$ with $\tau^{-1}$ being an initial segment of $\alpha^{-1}u\alpha$ and where $h_2\equiv h_4\nu$ with $\nu^{-1}$ being a terminal segment of $\alpha^{-1} u\alpha$. 
We can express $h_1\equiv h_5\rho$, where $\rho^{-1}$ is a maximal initial segment of $\alpha$ that cancels in the product $h_1\alpha$, with $\alpha\equiv \rho^{-1}\alpha_1$. 
Then $h_2\equiv h_5\alpha_1\equiv h_3 \tau$ and $h_2\equiv h_5\alpha_1 \equiv h_4\nu$.  Recall also that the freely reduced form of $hgh^{-1}$ is graphically equal to $h_3 u_1 h_4^{-1}$. Hence there exist subwords $\sigma_1, \dots, \sigma_4$ and $\beta_1,\dots, \beta_4$ of  $\alpha^{-1} u^{\pm 1}\alpha$ such that $h_1\equiv h_6\sigma_1\sigma_2\equiv h_7\sigma_3\sigma_4$ such that the freely  reduced form of  $hgh^{-1}$ is graphically equal to $h_6\beta_1\beta_2 u_1 \beta_3^{-1} \beta_4^{-1} h_7^{-1}$.
Recall also that $u_1$ is  a subword of $\alpha^{-1}u\alpha$ and that $h\equiv h_1 u^k \alpha$.

By assumption, $|h g h^{-1}|_Y\le R$. Since the freely reduced form of $h g h^{-1}$ is $h_6\beta_1\beta_2 u_1 \beta_3^{-1} \beta_4^{-1} h_7^{-1}$, Corollary~\ref{cor:canc}  implies that $|h_6|_Y, |h_7|_Y \le R+4C$.    Since $\sigma_1, \sigma_2, \alpha$ are subwords of the freely reduced word $g=\alpha^{-1}u\alpha$, and since by assumption $|g|_Y\le R$, Corollary~\ref{cor:canc}  implies that $|\sigma_1|_Y, |\sigma_2|_Y, |\alpha|_Y\le R+4C$.  We also have $h\equiv h_1 u^k \alpha\equiv h_6\sigma_1\sigma_2 u^k \alpha$, and by assumption $|h|_Y\ge L$.
By the triangle inequality we now get $|u^k|_Y\ge L-4(R+4C)$.  Since $|g|_Y\le R$, Corollary~\ref{cor:canc}  implies that $|u|_Y\le R+4C$. Thus 
\[
L-4(R+4C)\le |u^k|_Y \le k (R+4C)
\]
and hence $k\ge (L-4(R+4C))/(R+4C)=\frac{L}{R+4C}-4\ge 100(R+6C+1)$, where the last inequality holds by the assumption on $L$.
Thus the factorization  $h\equiv h_6\sigma_1\sigma_2 u^k \alpha$ satisfies all the requirements of the lemma.

\end{proof}

\begin{prop}\label{prop:ac}
Let $R\ge 1$ and $L\ge 100(R+4C)(R+6C+10)$.   Let $g,g'\in F(a,b,c)$ be nontrivial freely reduced words conjugate in $F(a,b,c)$ to some elements of $F(a,b)$,  and let $h\in F(a,b,c)$ be such that $|h|_Y\ge L$, $|g|_Y, |g'|_Y\le R$ and that $d_Y(h,gh), d_Y(h, g'h)\le R$. Then there exists a root-free nontrivial freely reduced $g_0\in F(a,b,c)$ such that $g=g_0^t$, $g'=g_0^r$, where $1\le |r|, |t|\le 7(R+4C+1)$. 

\end{prop}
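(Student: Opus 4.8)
The plan is to transport into the metric $d_Y$ the classical fact that two loxodromic isometries of a tree which each move a far-away point only a little must share an axis; here the $7$-aperiodicity of the $v_n$ will play the role that properness plays in the tree. First I would rewrite the hypotheses as $|h^{-1}gh|_Y=d_Y(h,gh)\le R$ and $|h^{-1}g'h|_Y=d_Y(h,g'h)\le R$, and apply Lemma~\ref{lem:use} to the element $h^{-1}$ (which has $|h^{-1}|_Y=|h|_Y\ge L$) together with $g$ and with $g'$. Writing the cyclic decompositions $g\equiv\alpha^{-1}u\alpha$ and $g'\equiv\alpha'^{-1}u'\alpha'$, this yields graphical factorizations $h^{-1}\equiv P\,u^{k}\alpha$ and $h^{-1}\equiv P'\,u'^{k'}\alpha'$, where (after replacing $u,u'$ by their inverses if necessary, so that $k,k'>0$) we have $k,k'\ge 100(R+6C+1)$ and each of the pieces $P,P',\alpha,\alpha'$ has $d_Y$-length at most $3(R+4C)$. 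Since $g,g'$ are conjugate in $F(a,b,c)$ into $F(a,b)$ and $u,u'$ are cyclically reduced, they are cyclic permutations of cyclically reduced elements of $F(a,b)$, hence $u,u'\in F(a,b)$ --- this is what makes the aperiodicity of the $v_n$ available later.

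Next I would read both factorizations inside the tree $X$. The vertices $\{Pu^{j}\}_{0\le j\le k}$ and $\{P'u'^{j}\}_{0\le j\le k'}$ are sub-arcs of the single freely reduced path $[1,h^{-1}]_X$, and are translates of the axes of $u$ and of $u'$; thus the periodic blocks $u^{k}$ and $u'^{k'}$ occupy two sub-intervals $I_u$ and $I_{u'}$ of this geodesic. On their overlap $I_u\cap I_{u'}$ the edge-label word is simultaneously periodic with period $u$ and with period $u'$. If the $A$-length of the overlap is at least $|u|_A+|u'|_A$, then Lemma~\ref{lem:LSz}, applied to the appropriate cyclic rotations of $u$ and $u'$, produces a root-free cyclically reduced word $u_0$ (lying in $F(a,b)$, being the primitive period of a subword of $u^{\infty}$) with $u=u_0^{p}$ and $u'=u_0^{q}$ for nonzero integers $p,q$.

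The hard part is to guarantee that the overlap really is this long, and this is the one place where $7$-aperiodicity is essential. The obstacle is that, although $P,P',\alpha,\alpha'$ have bounded $d_Y$-length, their $A$-lengths need not be bounded (a single special edge can traverse an arbitrarily long subword of some $v_n$), so the overlap cannot be estimated by a naive length comparison. Instead I would argue by contradiction, assuming without loss of generality that $|u|_A\ge|u'|_A$. If the overlap had $A$-length less than $2(|u|_A+|u'|_A)$, then $I_u$ and $I_{u'}$ would be essentially disjoint along $[1,h^{-1}]_X$, so all of the period-$u$ block $I_u$ except for a terminal piece of length at most $2(|u|_A+|u'|_A)\le 4|u|_A$ would be contained in one of the end pieces $P'$ or $\alpha'$ of the $g'$-factorization. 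That piece admits a $\mathcal W$-decomposition into at most $3(R+4C)$ factors, each of which, being a $\mathcal W$-word lying in $F(a,b)$, is by Lemma~\ref{lem:sub} a subword of some $7$-aperiodic $v_n^{\pm1}$; but it would contain $u^{k-4}$ (up to a cyclic rotation), and by the counting argument of Proposition~\ref{prop:power} (every block $u^{7}$ straddles at least two factors) this requires at least $\lfloor(k-4)/7\rfloor$ factors. Since $\lfloor(k-4)/7\rfloor\ge 14(R+6C+1)-2>3(R+4C)$, this is impossible; hence the overlap comfortably exceeds $|u|_A+|u'|_A$, and the previous paragraph applies. This is exactly where the size of the constant in the hypothesis on $L$ is consumed.

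Finally I would align the conjugators and bound the exponents. With $u=u_0^{p}$ and $u'=u_0^{q}$, the axes of $g$ and $g'$ in $X$ are $\alpha^{-1}\Ax(u_0)$ and $\alpha'^{-1}\Ax(u_0)$, and by the previous step they overlap along $[1,h^{-1}]_X$ in a segment whose $A$-length exceeds $||g||_X+||g'||_X=|u|_A+|u'|_A$. By the standard overlapping-axes argument in trees this forces the two axes to coincide, i.e. $\alpha'\alpha^{-1}$ stabilizes $\Ax(u_0)$; since this stabilizer is $\langle u_0\rangle$, we get $\alpha'=u_0^{j}\alpha$ for some integer $j$. Putting $g_0:=\alpha^{-1}u_0\alpha$, which is root-free as a conjugate of the root-free $u_0$, I obtain $g=\alpha^{-1}u_0^{p}\alpha=g_0^{p}$ and $g'=\alpha'^{-1}u_0^{q}\alpha'=\alpha^{-1}u_0^{q}\alpha=g_0^{q}$, so $t=p$ and $r=q$. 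For the bound on the exponents, $u=u_0^{p}$ is the cyclic core of $g$, so $|u_0^{|p|}|_Y=|u|_Y\le R+4C$ (the last inequality from Corollary~\ref{cor:canc}, as in Lemma~\ref{lem:use}); applying part (1) of Proposition~\ref{prop:power} to $u_0\in F(a,b)$ gives $\lfloor|p|/7\rfloor\le R+4C$, whence $1\le|p|\le 7(R+4C+1)$, and symmetrically $1\le|q|\le 7(R+4C+1)$. The only genuinely delicate point is the overlap estimate of the third paragraph; everything else is the tree geometry of free groups together with routine bookkeeping via Corollary~\ref{cor:canc}.
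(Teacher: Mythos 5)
Your proof is correct in substance, and its first two thirds coincide with the paper's: you apply Lemma~\ref{lem:use} to $h^{-1}$ with each of $g,g'$, and you rule out a short overlap of the periodic blocks $u^{k}$ and $(u')^{k'}$ by the same mechanism the paper uses in its Case~1 (a long power of $u$ would be swallowed by a piece of bounded $|\cdot|_Y$-length, contradicting Proposition~\ref{prop:power} via Corollary~\ref{cor:canc}). The endgame, however, is genuinely different. The paper stays purely combinatorial: it splits its Case~2 according to whether $|\alpha|=|\alpha'|$ or $|\alpha|<|\alpha'|$; in the aligned sub-case Lemma~\ref{lem:LSz} applies directly to $u$ and $u'$ themselves (their rotations agree because both periodic blocks terminate at the same position along $[1,h^{-1}]_X$), and the misaligned sub-case is eliminated by a first-letter argument ($\alpha'$ would have to begin with the first letter of the common period, contradicting that $(\alpha')^{-1}u'\alpha'$ is freely reduced). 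You instead invoke the classical fact about free actions on trees: suitable translates of $\Ax(g)$ and $\Ax(g')$ contain $I_u$ and $I_{u'}$, so the two axes share a segment of length at least $||g||_X+||g'||_X$, whence $[g,g']$ fixes a point, and freeness of the action forces $g,g'$ to commute and hence to be powers of a common root-free $g_0$. Your route is shorter and more conceptual (at the cost of importing a standard tree lemma the paper never uses); the paper's route is self-contained word combinatorics. Your final exponent bound matches the paper's.

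Two points need tightening, neither fatal. First, you write $u=u_0^{p}$, $u'=u_0^{q}$ with a single $u_0$ as the output of Lemma~\ref{lem:LSz}; but the lemma, applied to the rotations $u_*,u'_*$ aligned along the overlap, only gives $u_*=u_0^{p}$, $u'_*=u_0^{q}$, and then $u,u'$ are powers of possibly \emph{different} cyclic rotations of $u_0$ --- this misalignment is exactly what the paper's $|\alpha|<|\alpha'|$ sub-case exists to handle. Fortunately your axis argument does not need that identification: the long overlap of $\Ax(g)$ and $\Ax(g')$ already yields commutation and the common root $g_0$; then uniqueness of the reduced form $g\equiv\alpha^{-1}u\alpha$ shows $u$ is a power of a cyclically reduced root $u_0\in F(a,b)$ (free factors are root-closed), and Proposition~\ref{prop:power} bounds the exponents exactly as you state. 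So you should drop the premature identification and run the overlapping-axes argument on $\Ax(g)\cap\Ax(g')$ directly. Second, when the overlap is short it can happen that $I_{u'}$ sits strictly inside $I_u$, in which case $I_u$ minus the overlap splits into \emph{two} pieces (one in $P'$, one in $\alpha'$) rather than lying in a single end piece as you assert; one of the two still contains roughly $u^{(k-4)/2}$, and your constants have ample room, so the contradiction survives --- the paper treats this as its own sub-case (``$(u')^{r}$ is contained in $u^{k}$'').
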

\begin{proof}
We have $d_Y(h,gh)=|h^{-1}gh|_Y, d_Y(h, g'h)=|h^{-1}g'h|\le R$.  Write $g$ as a freely reduced word $g\equiv \alpha^{-1}u\alpha\in F(a,b)$,  with $u\in F(a,b)$ being cyclically reduced. Similarly, write $g'$ as a freely reduced word $g'\equiv (\alpha')^{-1}u'\alpha'\in F(a,b)$,  with $u'\in F(a,b)$ being cyclically reduced. 

Applying Lemma~\ref{lem:use} we conclude that there exist factorizations $h^{-1}\equiv h_0\sigma_1\sigma_2 u^k \alpha$ and $h^{-1}\equiv h_0'\sigma_1'\sigma_2' (u')^r \alpha'$ where $|k|, |r|\ge 100(R+6C+1)$, where $\sigma_1, \sigma_2$ are subwords of $g$, where where $\sigma_1', \sigma_2'$ are subwords of $g'$, and where $|h_0|_Y, |h_0'|_Y, |\sigma_1|_Y, |\sigma_2|_Y, |\sigma_1'|_Y, |\sigma_2'|\le R+4C$.

We now see how the subwords $u^k$ and $(u')^s$ overlap in \[h^{-1}\equiv h_0\sigma_1\sigma_2 u^k \alpha \equiv h_0'\sigma_1'\sigma_2' (u')^s \alpha'.\]

{\bf Case 1. } Suppose first that the length of the overlap between  $u^k$ and $(u')^s$ is $<|u|+|u'|$. Without loss of generality we may assume that $|u'|\le |u|$ and that $k,r>0$.

Then either $u^{k-2}$ is a subword of  $h_0'\sigma_1'\sigma_2' $,  or $u^{k-2}$ is a subword of $\alpha'$,  or $(u')^r$ is contained in $u^k$. 

Recall that $k,r\ge 100(R+6C+1)$. 

If $u^{k-2}$ is a subword of  $h_0'\sigma_1'\sigma_2' $ then Corollary~\ref{cor:canc}  implies that $|u^{k-2}|_Y\le  |h_0'\sigma_1'\sigma_2'|_Y+4C\le 3(R+4C)+4C=3R+16C$.   Since $u\in F(a,b)$, Proposition~\ref{prop:power} implies that $|u^{k-2}|_Y\ge (k-2)/7-1$. Hence $(k-2)/7-1\le |u^{k-2}|_Y\le  3R+16C$ and $k\le 7(3R+16C+1)+2$, yielding a contradiction. 

If $u^{k-2}$ is a subword of $\alpha'$, then Corollary~\ref{cor:canc}  implies that $|u^{k-2}|_Y\le |\alpha'|_Y+4C\le R+6C$. Since $|u^{k-2}|_Y\ge (k-2)/7-1$, we get $(k-2)/7-1\le |u^{k-2}|_Y\le R+6C$ and $k\le 7(R+6C+1)+2$, again yielding a contradiction with $k\ge 100(R+6C+1)$.

Suppose now that $(u')^r$ is contained in $u^k$. Since $|u'|\le |u|$ and the length of the overlap between $u^k$ and $(u')^r$ is $<|u|+|u'|$, it follows that $(u')^r$ is contained in some subword $u^2$ or $u^k$. Hence either $u^{k/4}$ is a subword of $h_0'\sigma_1'\sigma_2' $  or $u^{k/4}$ is a subword of $\alpha'$. We then again obtain a contradiction by a similar argument to above.

{\bf Case 2.} Suppose now that the length of the overlap between  $u^k$ and $(u')^s$ is $\ge |u|+|u'|$.  Without loss of generality we may assume that $|\alpha|\le |\alpha'|$.

Assume first that $|\alpha|= |\alpha'|$, so that $\alpha'=\alpha$. Then Lemma~\ref{lem:LSz} implies that there exists a cyclically reduced word $u_0\in F(a,b)$ such that $u=u_0^t$ and $u'=u_0^s$, so that $g=(\alpha^{-1} u_0\alpha)^t$ and $g'=(\alpha^{-1} u_0\alpha)^s$.
By assumption $|g|_Y, |g'|_Y\le R$ which by Corollary~\ref{cor:canc} implies that $|u_0^t|_Y, |u_0^s|_Y\le R+4C$. Hence by Proposition~\ref{prop:power} we have $|t|/7-1, |s|/7-1\le R+4C$ and hence $|t|,|s|\le 7(R+4C+1)$, as required. The conclusion of the proposition is established in this case.

Assume now that $|\alpha|<|\alpha'|$. Let $u_\ast$ be the cyclic permutation of $u$ such that the overlap between $u^k$ and $(u')^r$ in $h_1^{-1}$ ends in $u_\ast$.
Lemma~\ref{lem:LSz} implies that there exists a cyclically reduced word $u_0\in F(a,b)$ such that $u_\ast=u_0^t$ and $u'=u_0^s$.  We may assume (after possibly replacing $u_0$ by its inverse) that $rs>0$.
The fact that $|\alpha|<|\alpha'|$ now implies that the first letter of $\alpha'$ is the same as the first letter of $u_0$. This contradicts the fact that the word $g'\equiv (\alpha')^{-1} (u')^r \alpha =(\alpha')^{-1} u_0^{rs} \alpha'$ is freely reduced as written.  Thus Case~2 cannot happen, which completes the proof of the proposition.

\end{proof}
 
 \begin{cor}\label{cor:ac}
 The action of $F(a,b)$ on $Y$ is acylindrical. 
 \end{cor}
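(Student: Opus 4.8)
The plan is to derive acylindricity directly from Proposition~\ref{prop:ac}, which is the structural heart of the matter. Recall the definition of acylindricity: given $R\ge 0$, I must produce $L\ge 1$ and $M\ge 1$ such that whenever $d_Y(x,y)\ge L$, the set $S=\{f\in F(a,b)\mid d_Y(x,fx)\le R,\ d_Y(y,fy)\le R\}$ has at most $M$ elements. First I would reduce to the case where the two basepoints are related in a convenient way: since the action of $F(a,b)$ on $Y$ is by isometries and $F(a,b)$ acts transitively on the copies of itself sitting inside $VY=VX$, I may left-translate and assume $x=1\in F(a,b)$, and then set $h=y\in F(a,b)$, with $|h|_Y=d_Y(1,h)\ge L$. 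For $f\in S$, the conditions become $d_Y(1,f)=|f|_Y\le R$ and $d_Y(h,fh)=|h^{-1}fh|_Y\le R$ (using that $d_Y$ is left-invariant).

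The key observation is that every such $f$ is a nontrivial element of $F(a,b)$ (or the identity), hence is conjugate in $F(a,b,c)$ to an element of $F(a,b)$ trivially, so Proposition~\ref{prop:ac} applies to any pair $g,g'\in S$. I would fix the constant $L=100(R+4C)(R+6C+10)$ as in Proposition~\ref{prop:ac}. Then for any two elements $g,g'\in S\setminus\{1\}$, the proposition yields a common root-free $g_0\in F(a,b,c)$ with $g=g_0^{t}$ and $g'=g_0^{r}$ where $1\le|t|,|r|\le 7(R+4C+1)$. The crucial structural consequence is that all nontrivial elements of $S$ share the \emph{same} root-free element $g_0$: applying the proposition to the pair $(g,g')$ and to the pair $(g,g'')$ shows $g$ is a power of two root-free elements, and by uniqueness of roots in the free group $F(a,b,c)$ these root-free elements coincide. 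Therefore $S\setminus\{1\}\subseteq\{g_0^{j}\mid 1\le|j|\le 7(R+4C+1)\}$.

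This immediately bounds $|S|$: there are at most $2\cdot 7(R+4C+1)$ choices for the exponent $j$, plus the identity, so I may take $M=14(R+4C+1)+1$, a quantity depending only on $R$ (and the fixed constant $C$), as required. I would also note the edge case that $S$ might contain no nontrivial element at all, in which case the bound holds vacuously; and if $S$ contains exactly one nontrivial element there is nothing to compare, but the bound $M$ still trivially holds.

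The main obstacle is not in this corollary itself, which is a short counting argument, but in correctly invoking Proposition~\ref{prop:ac}: I must verify its hypotheses for \emph{every} pair drawn from $S$, namely that $|g|_Y,|g'|_Y\le R$ and $d_Y(h,gh),d_Y(h,g'h)\le R$ with $|h|_Y\ge L$, and I must argue carefully that the root-free element is common across the whole set. The cleanest way to secure the latter is to pick one fixed nontrivial $g_1\in S$ (if it exists) and apply the proposition to each pair $(g_1,g')$; this writes $g_1=g_0^{t}$ for a single $g_0$ depending only on $g_1$, and forces each $g'\in S\setminus\{1\}$ to be a power of that same $g_0$, since $g_0$ is the (essentially unique, up to inversion) root of $g_1$ in $F(a,b,c)$. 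With this fixed reference element, the uniqueness-of-roots argument becomes transparent and the counting bound follows directly.
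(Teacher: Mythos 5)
There is a genuine gap in your opening reduction. The definition of acylindricity quantifies over \emph{all} pairs of points $x,y\in Y$, and the vertex set of $Y$ is all of $F(a,b,c)$, not the orbit $F(a,b)\cdot 1=F(a,b)$. Since the acting group is only $F(a,b)$, equivariance lets you translate the pair $(x,y)$ only by elements of $F(a,b)$, and this does not allow you to assume $x=1$: a vertex such as $x=c^n$ lies in no $F(a,b)$-translate of $1$, and indeed the orbit $F(a,b)$ is not coarsely dense in $Y$ (each $\mathcal W$-word contains at most one $c^{-1}$ not preceded by letters of $v_m^{-1}$, so $d_Y(c^n,F(a,b))$ grows linearly in $n$), so such basepoints cannot be ignored or absorbed by adjusting constants. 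The phrase ``$F(a,b)$ acts transitively on the copies of itself sitting inside $VY$'' does not repair this, because the basepoints need not lie in any such copy. As written, your argument proves the acylindricity inequality only for basepoints in a single orbit, which is strictly weaker than the statement of the corollary.

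The repair is exactly the maneuver the paper makes, and it explains the precise hypotheses of Proposition~\ref{prop:ac}, which you noticed but dismissed as ``trivially'' satisfied. For arbitrary vertices $x,y\in F(a,b,c)$ with $d_Y(x,y)\ge L$, use left-invariance of $d_Y$ under the full group $F(a,b,c)$ to replace each $g\in S$ by $g_1:=x^{-1}gx$ and set $h:=x^{-1}y$; then $|g_1|_Y\le R$ and $|h^{-1}g_1h|_Y\le R$ with $|h|_Y\ge L$, but now $g_1$ is in general \emph{not} an element of $F(a,b)$ --- it is only conjugate in $F(a,b,c)$ to one --- and $h$ is an arbitrary element of $F(a,b,c)$. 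This is precisely why Proposition~\ref{prop:ac} is stated for elements conjugate into $F(a,b)$ and for arbitrary $h\in F(a,b,c)$. Since conjugation by $x$ is injective, bounding the set of such $g_1$ bounds $\#(S)$, and from that point on your argument is correct and coincides with the paper's: fix one nontrivial $g_1=g_0^t$ with $g_0$ root-free, apply Proposition~\ref{prop:ac} to each pair $(g_1,g_2)$, invoke uniqueness (up to inversion) of root-free roots in a free group to get a common $g_0$, and count to obtain $M=14(R+4C+1)+1$. One further minor omission: you should note, as the paper does, that it suffices to verify the acylindricity condition at vertices of $Y$.
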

\begin{proof}

It is enough to check the acylindricity condition for the vertices of $Y$.

Let $R\ge 1$. Put $L=L(R):=100(R+4C)(R+6C+10)$ and $M=M(R):=14(R+4C+1)+1$.

Let $x,y\in VY=F(a,b,c)$ be vertices such that $d_Y(x,y)\ge L$. 
Put \[S=\{g\in F(a,b)| d_Y(x,gx)\le R, d_Y(y,gy)\le R\}.\]

We claim that $\#(S)\le M$. 

We have $d_Y(x,y)=d_Y(1,x^{-1}y)\ge L$.  Let $g\in F(a,b)$ be such
that $d_Y(x,gx)\le R, d_Y(y,gy)\le R$.
Then for $g_1=x^{-1}gx$ we have $|g_1|_Y=|x^{-1}gx|_Y=d_Y(x,gx)\le R$ and 
\[
d_Y(x^{-1}y, g_1 x^{-1}y)=|y^{-1}x^{-1} g_1 x^{-1}y|_Y=|y^{-1}x^{-1} x^{-1}gx x^{-1}y|_Y=|y^{-1}gy|_Y=d_Y(y,gy)\le R.
\]

Put $h=x^{-1}y\in F(a,b,c)$, so that $|h|_Y=d_Y(x,y)\ge L$. 
Also put 
\begin{gather*}
S_1:=\{g_1\in F(a,b,c)\big| \\ |g_1|_Y\le R, |h^{-1}g_1h|_Y\le R, \text{ and $g_1$ is conjugate to an element of $F(a,b)$ in $F(a,b,c)$}\}.
\end{gather*}

Since $x^{-1}Sx\subseteq S_1$, to verify the claim above it is enough to show that $\#(S_1)\le M$.
 
Suppose $\#(S_1)\ge 2$. Let $1\ne g_1\in S_1$. We can uniquely express $g_1$ as $g_1=g_0^t$ where $g_0\in F(a,b,c)$ is a nontrivial root-free element and $t\ge 1$. Now if $g_2\in S_1$ is an arbitrary nontrivial element, then Proposition~\ref{prop:ac} implies that $g_2=g_0^s$ where $|s|\le 7(R+4C+1)$.  It follows that $\#(S_1)\le M$, as required.

\end{proof}

We now summarize the properties of the action of $F(a,b)$ on $Y$:

\begin{thm}\label{thm:main}
The following hold:

\begin{enumerate}
\item The graph $Y$ is Gromov-hyperbolic and $F(a,b)$ acts on $Y$ by simplicial isometries.
\item The action of $F(a,b)$ on $Y$ is acylindrical.
\item The action of $F(a,b)$ on $Y$ is purely loxodromic, that is, every $1\ne g\in F(a,b)$ acts on $Y$ as a loxodromic isometry.
\item For every $1\ne g\in F(a,b)$ we have $||g||_Y\ge 1/7$.
\item For any $p\in Y$ the orbit $F(a,b)p\subseteq Y$ is a quasiconvex subset of $Y$.
\item There exists $C\ge 1$ such that for any $x,y\in F(a,b)$ if $\alpha_{x,y}$ is a geodesic from $x$ to $y$ in the Cayley graph of $F(a,b)$ with respect to the basis $\{a,b\}$, and if $\beta=[x,y]_Y$ is a geodesic from $x$ to $y$ in $Y$, then $\alpha$ and $\beta$ are $C$-Hausdorff close in $Y$.
\item For any $p\in Y$, the orbit map $F(a,b)\to Y$, $g\mapsto gp$, is not a quasi-isometric embedding. Moreover, the action of $F(a,b)$ on $Y$ is not proper.
\end{enumerate}
\end{thm}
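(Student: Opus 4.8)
\emph{Approach.} The plan is to assemble Theorem~\ref{thm:main} from the results already proved, supplying only the short logical bridges that remain; there is essentially no new hard step, and the subtlety lies entirely in bookkeeping. First I would dispatch parts (1), (2) and (4) by direct citation: hyperbolicity of $Y$ together with the simplicial isometric action come from Proposition~\ref{prop:KR}(1) and Definition~\ref{def:Y} (with $F(a,b)$ acting by restriction of the $F(a,b,c)$-action), acylindricity is exactly Corollary~\ref{cor:ac}, and the bound $||g||_Y\ge 1/7$ is Proposition~\ref{prop:power}(2). Part (3) I would then deduce from part (4): by the remark following the definition of asymptotic translation length, in the Gromov-hyperbolic space $Y$ an element acts loxodromically precisely when its translation length is positive, and $||g||_Y\ge 1/7>0$ for every $1\ne g\in F(a,b)$.

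For part (5) the key observation is that, for $x,y\in F(a,b)$, a geodesic $\alpha$ from $x$ to $y$ in the Cayley graph of $F(a,b)$ with respect to $\{a,b\}$ coincides with the geodesic $[x,y]_X$ in $X$. Indeed $x^{-1}y\in F(a,b)$, so its freely reduced form over $\{a,b\}^{\pm 1}$ involves no $c^{\pm 1}$ and is therefore identical to its freely reduced form over $\{a,b,c\}^{\pm 1}$; the edge-path labelled by this word is simultaneously the unique $\{a,b\}$-geodesic and the $d_X$-geodesic $[x,y]_X$, which sits inside $X\subseteq Y$. With this identification part (5) is precisely Proposition~\ref{prop:KR}(2), using the same constant $C$ fixed there.

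Part (6) is the one place where the construction is turned against itself, and I would prove it with an explicit witnessing sequence. Each positive word $v_n\in F(a,b)$ is a subword of $w_n=v_nc$, hence is a single $\mathcal W$-word, so the distance formula (Lemma~\ref{lem:dist}) gives $d_Y(1,v_n)=|v_n|_Y=1$ for all $n$; meanwhile $|v_n|_A=|v_n|\to\infty$ by Proposition~\ref{p:olsh}(1). For any basepoint $p$ the displacement $d_Y(p,v_np)$ differs from $d_Y(1,v_n)$ by at most $2\,d_Y(1,p)$, so it stays bounded while the $\{a,b\}$-word length of $v_n$ is unbounded; hence no quasi-isometry inequality can hold and the orbit map is not a quasi-isometric embedding. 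The same sequence shows the action is not metrically proper: taking $B$ to be the closed $d_Y$-ball of radius $1$ about the vertex $1$, each $v_n$ lies in $B\cap v_nB$ (since $d_Y(1,v_n)=1$ and $1\in B$), so the infinitely many distinct elements $v_n$ all satisfy $B\cap v_nB\ne\varnothing$. The main (and only) point requiring care is confirming that these $v_n$ are genuinely $\mathcal W$-words of $Y$-length exactly $1$ and of unbounded $\{a,b\}$-length, which is immediate from the definitions and Proposition~\ref{p:olsh}(1); everything else is citation and triangle-inequality bookkeeping.
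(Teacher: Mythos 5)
Your proposal handles parts (1)--(4), (6) and (7) correctly and in essentially the same way as the paper: direct citations of Proposition~\ref{prop:KR}, Corollary~\ref{cor:ac} and Proposition~\ref{prop:power}; the standard fact that in a Gromov-hyperbolic space an isometry is loxodromic if and only if its asymptotic translation length is positive (so (4) implies (3)); and the witnessing sequence $v_n$, which is a single $\mathcal W$-word with $|v_n|_Y=1$ while $|v_n|\to\infty$, for the failure of quasi-isometric embedding and of metric properness. Indeed your write-up is in two places slightly more careful than the paper's: you spell out the identification of $\{a,b\}$-Cayley-graph geodesics with $d_X$-geodesics that is needed before Proposition~\ref{prop:KR}(2) can be invoked, and your non-properness argument works for an arbitrary basepoint $p$ rather than only $p=1$.

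However, there is a genuine gap, caused by a numbering slip: what you call ``part (5)'' is actually part (6) of the theorem (Hausdorff closeness of geodesics), and what you call ``part (6)'' is part (7) (failure of quasi-isometric embedding and properness). As a result the true part (5) --- that every orbit $F(a,b)p\subseteq Y$ is a quasiconvex subset of $Y$ --- is never addressed anywhere in your proposal. In the paper this is exactly Proposition~\ref{prop:qc}, which you could simply cite; alternatively it follows from the identification you already made: for $g\in F(a,b)$, any $d_Y$-geodesic from $1$ to $g$ is $C$-Hausdorff close (by Proposition~\ref{prop:KR}(2)) to the $d_X$-geodesic $[1,g]_X$, whose vertices are the prefixes of the freely reduced $\{a,b\}$-word for $g$ and hence lie in $F(a,b)$; thus the orbit $F(a,b)\cdot 1$ is $(C+1)$-quasiconvex, and an arbitrary orbit $F(a,b)p$ is then quasiconvex as well, since it lies at Hausdorff distance at most $d_Y(1,p)$ from $F(a,b)\cdot 1$. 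Without some such sentence, one of the seven assertions of the theorem is left unproved.
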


\begin{proof}

Parts (1) and (6) are established in Proposition~\ref{prop:KR}.  Part (2) is Corollary~\ref{cor:ac} above. Part (4) is Proposition~\ref{prop:power}, and part (4) directly implies part (3).  Part (5) is Proposition~\ref{prop:qc}.

To see that (7) holds, note that for every $n\ge 1$ $v_n(a,b)$ is a $\mathcal W$-word and hence, by definition of $Y$, we have $d_Y(1,v_n)=|v_n(a,b)|_Y=1$.
On the other hand, $v_n$ is a freely reduced word in $F(a,b)$ with $|v_n|\to\infty$ as $n\to\infty$. This shows, with $p=1\in VY$, that the orbit map $F(a,b)\to Y$,$g\mapsto gp$ is not a quasi-isometric embedding and that the action of $F(a,b)$ on $Y$ is not proper.
\end{proof}

\end{document}